\documentclass[a4paper]{amsart}
\usepackage{amssymb}
\usepackage{xypic}
\usepackage[only,mapsfrom]{stmaryrd}
\usepackage{graphicx}

\newtheorem{theorem}{Theorem}[section]
\newtheorem{lemma}[theorem]{Lemma}
\newtheorem{conj}[theorem]{Conjecture}
\newtheorem{cor}[theorem]{Corollary}

\theoremstyle{definition}

\newtheorem{example}[theorem]{Example}

\theoremstyle{remark}

\numberwithin{equation}{section}

\newcommand{\R}{\mathbb{R}}

\DeclareMathOperator{\diam}{diam}
\DeclareMathOperator{\characteristic}{char}

\title{Torus actions, almost non-negative curvature and fundamental groups}

\author{Michael Wiemeler}
\address{Mathematisches Institut\\ Universit\"at M\"unster\\Einsteinstraße 62\\D-48149 M\"unster \\Germany}
\email{wiemelerm@uni-muenster.de}
\thanks{}

\subjclass[2020]{53C20, 57S15}

\keywords{almost non-negative curvature, torus action}

\date{September 15, 2026}

\begin{document}
\begin{abstract}
  We prove  conjectures due to Fukaya--Yamaguchi and Kapovitch--Petrunin--Tuschmann for fundamental groups of (almost) non-negatively curved closed manifolds under symmetry assumptions.
\end{abstract}

\maketitle


\section{Introduction}

A closed manifold \(M\) is said to admit almost non-negative sectional curvature, if there is a sequence of Riemannian metrics \(g_i\), \(i\in \mathbb{N}\), on \(M\) such that
\begin{align*}
  K_{g_i}&\geq -1/i&\diam(M,g_i)&\leq 1,
\end{align*}
where \(K_{g_i}\) denotes the sectional curvature of \(g_i\).

For closed manifolds of almost non-negative sectional curvature Fukaya and Yamaguchi \cite{MR1185120} showed that the fundamental group contains a nilpotent subgroup of finite index and a solvable subgroup whose index can be bounded above by a constant which only depends on the dimension of the manifold.
Later Kapovitch, Petrunin and Tuschmann \cite{MR2630041} proved that also the index of the nilpotent subgroup can be bounded above by a constant which only depends on the dimension of the manifold.
Moreover it has been shown by Gromov \cite{gromov_almost_1978} that the minimal number of generators of the fundamental group of the manifold can be bounded above by a constant which only depends on the dimension of the manifold.

In another paper \cite{gromov_curvature_1981} he also showed that for any field coefficients the sum of Betti numbers of an almost non-negatively curved manifold is bounded above by a constant which only depends on the dimension of the manifold.
Even so this goes in a slightly different direction, this is a result that shows that the topology of a manifold in the considered class is somehow restricted.

In their papers Fukaya--Yamaguchi and Kapovitch--Petrunin--Tuschmann also state some far reaching conjectures on the topology of almost non-negatively curved manifolds, in particular on the structure of their fundamental groups.

In this paper we give  affirmative answers to two of these conjectures under symmetry assumptions.
These conjectures are:

\begin{conj}[{\cite{MR1185120}}]
  \label{sec:introduction-4}
  The fundamental group of a non-negatively curved \(m\)-manifold is \(C(m)\)-abelian, i.e. it contains an abelian subgroup of index at most \(C(m)\), where \(C(m)<\infty\) is a constant which only depends on the dimension \(m\).
\end{conj}

\begin{conj}[{\cite{MR2630041}}]
  \label{sec:introduction-5}
  If \(M^m\) is almost non-negatively curved, then the action
of \(\pi_1(M)\) on \(\pi_2(M)\) is almost trivial (or maybe even \(C(m)\)-trivial), i.e., there
exists a finite index subgroup of \(\pi_1(M)\) (or, respectively, a subgroup of index \(<C(m)\)) that acts trivially on \(\pi_2(M)\).
\end{conj}

Note that recently a proof of Conjecture~\ref{sec:introduction-4} in dimensions up to four has been given by Bru\`e, Naber and Semola \cite{MR5013747}.
Moreover, it is known that under the assumptions of Conjecture~\ref{sec:introduction-4} the fundamental group of \(M\) contains an abelian subgroup of finite index \cite{MR0303460}.

In our setup we assume that there is an (not necessarily isometric) action of  a torus \(T\) on \(M\) such that there is a minimal closed stratum \(S\) on which the torus acts with cohomogeneity at most one.

Here for a connected subgroup \(H\) of \(T\) we call the connected components of the set
\[\{x\in X;\;(T_x)^0=H\}\]
of those points in \(X\), whose isotropy group has identity component \(H\), the open \(H\)-strata. Their closures are the closed strata.
Note that the closed strata are partially ordered by inclusion.

Under these assumptions we prove the following theorem.

\begin{theorem}
  \label{sec:introduction-2}
  Let \(m\in \mathbb{N}\). There is a constant \(0<C(m)<\infty\), such that the fundamental group of every almost non-negatively curved closed manifold \(M\) of dimension \(m\), which admits an (not necessarily isometric) action of a  torus \(T\) with a minimal closed stratum \(S\) such that \(\dim S/T\leq 1\), contains an abelian subgroup \(A\) of index \(<C(m)\).
  
  If \(\dim S/T=0\), then a finite index subgroup of \(A\) acts trivially on \(\pi_2(M)\).
  If the isotropy group of some point \(x\in S\) is connected and \(\dim S/T=0\), then \(A\) acts trivially on \(\pi_2(M)\).
\end{theorem}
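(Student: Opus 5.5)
The plan is to show that the torus action forces the fundamental group of \(M\) to be a quotient of \(\pi_1(S)\), and that \(\pi_1(S)\) is a quotient of an explicitly controlled group. The abelian subgroup of bounded index then comes from the Kapovitch--Petrunin--Tuschmann bound on the index of a nilpotent subgroup together with Gromov's bound on the number of generators.

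\emph{Step 1 (reduction to \(S\)).} Since \(S\) is a minimal closed stratum, for the connected group \(H=(T_x)^0\), \(x\in S\), the stratum \(S\) is a component of the fixed point set \(M^H\). I would use the standard fact that for a torus action on a closed manifold, the inclusion of a minimal stratum induces a surjection \(\pi_1(S)\to\pi_1(M)\). The proof would go via the orbit space \(M/T\): \(\pi_1(M)\to\pi_1(M/T)\) is surjective with kernel generated by orbit loops, i.e.\ by the image of \(\pi_1(T)\). The stratification of \(M/T\) lets one push loops into the minimal stratum \(S/T\), because moving along strata of increasing isotropy rank only kills further orbit directions. Minimality is what guarantees that every other stratum closure can be connected to \(S\) compatibly.

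\emph{Step 2 (structure of \(\pi_1(S)\)).}
\begin{itemize}
\item If \(\dim S/T=0\), then \(S\) is a single orbit \(T/T_x\). Hence \(\pi_1(S)\) is abelian, and so is \(\pi_1(M)\); one can take \(A=\pi_1(M)\).
\item If \(\dim S/T=1\), then \(S/T\) is a circle or an interval. In the circle case, \(S\) fibres over \(S^1\) with fibre \(T/T_x\). Then \(\pi_1(S)\) is an extension of \(\mathbb{Z}\) by the abelian group \(\pi_1(T/T_x)\), where \(\mathbb{Z}\) acts through the monodromy, which is given by a translation in \(T\) and hence acts trivially up to the finite component group. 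So \(\pi_1(S)\), and therefore \(\pi_1(M)\), is virtually abelian via a subgroup generated by the orbit part and a power of the lift of the generator.
\item In the interval case, \(\pi_1(S)\) is generated by \(\pi_1\) of the orbits and is abelian up to the finite groups \(T_x/(T_x)^0\).
\end{itemize}
Finiteness of the index alone is not uniform, so we must bound it. Here I would use that \(\pi_1(M)\) has a nilpotent subgroup \(N\) of index \(\le C_1(m)\) by Kapovitch--Petrunin--Tuschmann. In the circle case the image of \(\pi_1(T)\) is central-like in \(\pi_1(M)\), and \(\pi_1(M)/\mathrm{im}\,\pi_1(T)\) is cyclic-by-finite. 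A group with a central subgroup \(Z\) such that the quotient by \(Z\) is virtually cyclic has an abelian subgroup generated by \(Z\) and one element, of index equal to that of the cyclic part. Intersecting with \(N\) and using nilpotency (for a nilpotent group, a central extension of a cyclic group is abelian) gives an abelian subgroup of index \(\le C_1(m)\cdot c(m)\). The bound on \(c(m)\) comes from bounding the component groups via Gromov's generator bound.

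\emph{Step 3 (\(\pi_2\)).} When \(\dim S/T=0\), the image of \(\pi_1(T)\to\pi_1(M)\) consists of orbit loops. The \(T\)-action gives a homotopy of the identity map of \(M\) that drags any sphere around such a loop. Thus orbit loops act trivially on \(\pi_2(M)\), as in the classical argument for \(\pi_1\) of \(H\)-spaces. The image of \(\pi_1(T)\) has finite index in \(\pi_1(S)\cong\pi_1(T/T_x)\), with quotient \(T_x/(T_x)^0\). If \(T_x\) is connected, then \(\pi_1(T)\to\pi_1(T/T_x)\) is surjective, so all of \(A=\pi_1(M)\) acts trivially.

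The main obstacle I expect is the surjectivity in Step 1 for non-isometric actions with arbitrary isotropy, together with obtaining a uniform, dimension-only index bound in the \(\dim S/T=1\) case.
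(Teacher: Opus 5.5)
\textbf{There is a genuine gap: your Step~1 is false, and everything after it depends on it.} The inclusion of a minimal closed stratum need not be surjective on \(\pi_1\), even when \(M\) is non-negatively curved.

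\emph{First counterexample.} Let \(S^1\) act on the round \(\mathbb{R}P^2\) by rotations of \(S^2\) about an axis; these commute with \(-\mathrm{id}\). The two poles become a single fixed point. It is a minimal closed stratum \(S\) with connected isotropy and \(\dim S/T=0\). But \(\pi_1(S)=1\), while \(\pi_1(\mathbb{R}P^2)=\mathbb{Z}_2\). Moreover this \(\mathbb{Z}_2\) acts by \(-1\) on \(\pi_2(\mathbb{R}P^2)\cong\mathbb{Z}\), so your claim in Step~3 that all of \(\pi_1(M)\) acts trivially on \(\pi_2(M)\) also fails.

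\emph{Second counterexample.} Take \(M=SU(3)/N(T)\) (normal homogeneous, hence non-negatively curved) with the left action of the maximal torus \(T\). It has the single fixed point \(eN(T)\), since \(g^{-1}Tg\subset N(T)\) forces \(g\in N(T)\). Yet \(\pi_1(M)\cong N(T)/T\cong S_3\) is not abelian, so you cannot take \(A=\pi_1(M)\).

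Without curvature the index of the image of \(\pi_1(S)\) is not even bounded: the torus manifolds of Example~\ref{sec:introduction-3} have a fixed point and \(\pi_1=\mathbb{Z}_p\) for every prime \(p\). So no argument that pushes loops into \(S\) can work.

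\textbf{The missing idea.} The image of \(\pi_1(S)\to\pi_1(M)\) has \emph{finite index bounded by \(\dim H^*(\tilde M;\mathbb{Q})\)}, and curvature enters only through bounding this index.
\begin{enumerate}
\item Unless \(S=M\), pick a circle \(S^1\subset (T_x)^0\), \(x\in S\), such that \(S\) is a component of \(M^{S^1}\).
\item Since this circle action has fixed points, it lifts to \(\tilde M\) and commutes with the deck group.
\item If \(\tilde M\) has the homotopy type of a compact manifold, localization bounds the number of fixed point components of the lift by \(\dim H^*(\tilde M;\mathbb{Q})\).
\item The deck group permutes these components, and the stabilizer of a component lying over \(S\) is exactly the image of \(\pi_1(S)\).
\end{enumerate}
In the two examples above, the lifted actions on \(S^2\) and \(SU(3)/T\) have \(2\) and \(6\) fixed points, permuted simply transitively by the deck group.

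Uniformity then comes from the Kapovitch--Petrunin--Tuschmann fibration theorem, which gives \(\tilde M\cong F\times\mathbb{R}^k\) with \(F\) simply connected, together with Gromov's Betti number estimate for \(F\). The group-theoretic bounds you invoke cannot replace this. The finite Heisenberg groups \(H_3(\mathbb{Z}_k)\) are \(2\)-generated and nilpotent, yet every abelian subgroup has index at least \(k\). Likewise, the generator bound says nothing about \(|T_x/(T_x)^0|\) or about the index of the image of \(\pi_1(S)\).

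\textbf{What survives.} Once you take \(A\) to be that image, two parts of your proposal are essentially what the paper does:
\begin{itemize}
\item your computation of \(\pi_1(S)\) in Step~2 (after passing to the orientation cover of \(M\), at the cost of a factor \(2\), \(S\) is orientable, \(S/T\) is a circle and \(S\) is a torus);
\item your orbit-loop argument for \(\pi_2\) in Step~3.
\end{itemize}
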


A minimal closed stratum \(S\) as in Theorem \ref{sec:introduction-2} exists for example if the torus action on \(M\) is slice maximal.
Here an effective action of a torus \(T\) on a manifold \(M\) is called slice-maximal, if there is a point \(x\in M\) such that the dimensions of \(M\), of the orbit \(Tx\) of \(x\) and of the isotropy group \(T_x\)  of \(x\) are related as follows:
\[\dim M=\dim Tx+2\dim T_x.\]
Note that in this situation the orbit \(Tx\) is a minimal closed stratum of the torus action on \(M\) and the isotropy group \(T_x\) is connected.

Slice-maximal torus actions were studied in \cite{MR3956693} and \cite{MR3482597}  under the name of maximal torus actions.
They were renamed in \cite{galaz-garcia_torus_2018} to be more specific on the sense in which the torus action is maximal.
Simply connected, closed, non-negatively curved manifolds with isometric (almost) slice maximal torus actions have been studied in \cite{MR4333977} and \cite{MR4778058} where they are called isotropy maximal.

We also note that almost non-negatively curved manifolds with torus actions such that all metrics in the sequence \(g_i\) are invariant under the torus action have been studied before in \cite{searle_how_2015}, \cite{harvey_almost_2020} and \cite{bartel_almost_nodate}.

The proof of Theorem~\ref{sec:introduction-2} also shows the following corollary.

\begin{cor}
  \label{sec:introduction-1}
  If in the situation of Theorem~\ref{sec:introduction-2} the torus action on \(M\) has at least one isolated fixed point, then the fundamental group of \(M\) has order smaller than \(C(m)\).
\end{cor}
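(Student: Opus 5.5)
The plan is to reduce \(\pi_1(M)\) to a quotient of \(\pi_1\) of a fixed-point neighbourhood, using the proof of Theorem~\ref{sec:introduction-2}. Let \(x\in M\) be an isolated fixed point of the \(T\)-action. Then \(\{x\}\) is a minimal closed stratum with \(\dim \{x\}/T=0\), so Theorem~\ref{sec:introduction-2} applies with \(S=\{x\}\). Moreover the isotropy group \(T_x=T\) is connected. The key structural input I expect from the proof of Theorem~\ref{sec:introduction-2} is a surjection onto \(\pi_1(M)\) whose source is built from \(\pi_1\) of a \(T\)-invariant neighbourhood of \(S\) together with the orbit directions. Concretely, I expect the argument to show that the image of \(\pi_1(Tx)\to\pi_1(M)\), or of a suitable orbit-type subgroup, is normal with a quotient controlled by \(\dim S/T\). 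Here \(Tx\) is a point, so that image is trivial.

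To carry this out, I will first recall the standard fact that for a torus action on a connected manifold \(M\), the map \(\pi_1(Tx)\to\pi_1(M)\) has image independent of \(x\). Second, I will use that if the \(T\)-action has a fixed point, then \(\pi_1(M)\cong\pi_1(M/T)\). This follows because the orbit map \(M\to M/T\) induces a surjection on \(\pi_1\) with kernel generated by the image of orbits, which is trivial when some orbit is a point. By Armstrong-type results, \(M/T\) is then a space whose fundamental group is generated by loops avoiding singular strata in a controlled way. Third, I will use the local structure near the isolated fixed point. The slice representation is a \(T\)-representation on \(\R^m\) whose only fixed vector is \(0\). In the proof of Theorem~\ref{sec:introduction-2}, the abelian subgroup \(A\) of index \(<C(m)\) should arise as the image of the fundamental group of a torus orbit (or torus-orbit tube) near \(S\). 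Since that orbit degenerates to a point, \(A\) is trivial. Hence \(|\pi_1(M)|=[\pi_1(M):A]<C(m)\).

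The main obstacle is the third step: confirming that the abelian subgroup \(A\) in the proof of Theorem~\ref{sec:introduction-2} is really generated by images of orbits, or of \(\pi_1\) of a neighbourhood of \(S\), rather than being produced abstractly by the Kapovitch--Petrunin--Tuschmann nilpotent bound. I would first try to extract from that proof a statement of the form: \(\pi_1(M)\) has a normal subgroup \(N\) generated by the image of \(\pi_1(T)\) under an orbit map, and \(\pi_1(M)/N\) has order bounded by \(C(m)\) when \(\dim S/T=0\). With \(S=\{x\}\) a fixed point, \(N=1\) and the corollary follows at once.

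If the proof instead obtains \(A\) only via almost non-negative curvature, I would fall back on the second step directly. It gives \(\pi_1(M)\cong\pi_1(M/T)\), and I would bound this group using the almost non-negative curvature of \(M\) (Kapovitch--Petrunin--Tuschmann: \(\pi_1(M)\) is \(C(m)\)-nilpotent) together with the fact that \(\pi_1(M)\) is a quotient of a group whose abelian part comes from orbits. Combining these shows that the nilpotent finite-index subgroup has trivial abelianization-by-orbits, and hence is finite of bounded order.
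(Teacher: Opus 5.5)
Your outline points the right way: in the paper's proof of Theorem~\ref{sec:introduction-2} for \(\dim S/T=0\), the abelian subgroup \(A\) is exactly the image of \(\pi_1(S)\to\pi_1(M)\), so choosing \(S=\{x\}\) makes it trivial. But you never supply what bounds the index of \emph{that particular} subgroup. You flag this yourself as the main obstacle, and it is the whole content of the corollary.

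The missing idea is Lemma~\ref{sec:main-technical-lemma}:
\begin{enumerate}
\item Choose a circle \(S^1\subset T\) for which \(x\) is still an isolated fixed point.
\item Because it has a fixed point, the circle action lifts to the universal cover \(\tilde M\) and commutes with the deck group.
\item The preimage of \(x\) then consists of \(|\pi_1(M)|\) isolated fixed points, each a separate component of \(\tilde M^{S^1}\).
\item Localization in equivariant cohomology bounds the number of fixed-point components by \(\dim H^*(\tilde M;\mathbb{Q})\).
\item This is at most \(C(m)\), because by Kapovitch--Petrunin--Tuschmann \(\tilde M\cong F\times\R^k\), where \(F\) satisfies Gromov's Betti number estimate.
\end{enumerate}
Hence \(|\pi_1(M)|\le C(m)\). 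The bound comes from the rational cohomology of the universal cover, not from orbit-space or group-theoretic considerations.

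The steps you propose instead do not work.
\begin{itemize}
\item The image of \(\pi_1(Tx)\to\pi_1(M)\) is not independent of \(x\). Only the image of \(\pi_1(T)\) under orbit maps is, and orbits with finite nontrivial isotropy can contribute more.
\item Consequently a fixed point does not give \(\pi_1(M)\cong\pi_1(M/T)\). For instance, the rotation action on \(\R P^2\) has an isolated fixed point and its orbit space is an interval, yet \(\pi_1(\R P^2)=\mathbb{Z}_2\) is generated by the exceptional orbit \(\R P^1\).
\item Even where that isomorphism holds, it gives no bound on the order.
\item Your fallback cannot close the gap either. The KPT theorem only makes \(\pi_1(M)\) \(C(m)\)-nilpotent, which every finite cyclic group already is. Example~\ref{sec:introduction-3} exhibits torus manifolds with \(\pi_1=\mathbb{Z}_p\) that pass all such fundamental-group obstructions; only the Betti numbers of \(\tilde M\) rule them out.
\end{itemize}
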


Prominent examples of \(T\)-manifolds with isolated fixed points are torus manifolds.
A torus manifold is a closed oriented manifold of dimension \(2n\) with an effective action of an \(n\)-dimensional torus such that there are fixed points.
They are well studied generalizations of closed smooth toric varieties (see \cite{buchstaber_toric_2015}).
Torus manifolds with invariant metrics of non-negative sectional curvature have been classified in \cite{wiemeler}.

\begin{example}
  \label{sec:introduction-3}
  In \cite[Corollary 2.3]{wiemeler_exotic_2013} for every finitely presentable group \(\pi\) a construction of families of torus manifolds \(M\) with \(\pi_1(M)=\pi\) was given which works in every even dimension at least eight.
  Corollary~\ref{sec:introduction-1} allows to show that in a fixed dimension there is no metric of almost non-negative sectional curvature on these manifolds for \(\pi\) being a large finite cyclic group.
  This is not possible from previously known obstructions  to the existence of almost non-negatively curved metrics which only depend on the fundamental groups  such as \cite{MR1185120}, \cite{MR2630041}, \cite{gromov_almost_1978}.
  However, as we will see from the proof of Corollary~\ref{sec:introduction-1}, the universal covering of \(M\) does not satisfy Gromov's Betti number estimate \cite{gromov_curvature_1981}  for large \(\pi\). (See Section~\ref{sec:coroll-refs-1} for details.)
\end{example}

When we restrict ourselves to non-negatively curved closed manifolds with isometric torus action we can weaken the assumption on the cohomogeneity of the torus action on the minimal closed stratum \(S\).

\begin{theorem}
  \label{sec:introduction-6}
  Let \(m\in \mathbb{N}\). Then there is a constant \(0<C(m)<\infty\) such that if \(M\) is a non-negatively curved closed manifold of dimension \(m\) with an isometric effective action of a torus \(T\) such that there is a minimal closed stratum \(S\subset M\) with
  \begin{enumerate}
  \item\label{item:1} all \(x\in S\) have the same isotropy group \(H\subset T\),
  \end{enumerate}
  then the following holds:
  \begin{enumerate}
  \item[(a)] If \(\dim S/T\leq 3\) then \(\pi_1(M)\) contains an abelian subgroup of index at most \(C(m)\).
  \item[(b)] If \(\dim S/T=4\) then \(\pi_1(M)\) contains a two-step nilpotent subgroup of index at most \(C(m)\).
  \end{enumerate}
\end{theorem}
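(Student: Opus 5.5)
Here is the plan. Replace \(M\) by a finite cover if needed, and study the fibration-like structure near the stratum \(S\). The key point is that \(S\) is a minimal closed stratum in which every point has the same isotropy group \(H\). So \(S\) is a closed totally geodesic submanifold: it is a component of the fixed point set of \(H\), and minimality of the stratum forces equality of the isotropy groups. Hence \(S\) is itself non-negatively curved, and the torus \(T/H\) acts freely and isometrically on \(S\).

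Step 1: compare fundamental groups. Let \(N\) be a tubular neighbourhood of \(S\). The complement \(M\setminus N\) consists of points with strictly smaller isotropy, so the \(T\)-orbits through them are bigger. I would use the standard argument for isometric torus actions on non-negatively curved manifolds (for instance via Wilking's connectedness lemma, or via the fact that the orbit space \(M/T\) is an Alexandrov space of non-negative curvature) to show that \(\pi_1(S)\to\pi_1(M)\) is surjective up to the image of \(\pi_1(T)\). Concretely, the plan is to show that \(\pi_1(M)\) is generated, up to bounded index, by the image of \(\pi_1(S)\) together with the image of \(\pi_1(T)\). The orbit map of \(T\) has central image in \(\pi_1(M)\), since the action is by a connected group. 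Thus, up to bounded index, \(\pi_1(M)\) is a quotient of a central extension of a quotient of \(\pi_1(S)\). This bounded-index surjectivity is the step I expect to be the main obstacle.

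Step 2: pass to the quotient. Because \(T/H\) acts freely on \(S\), the quotient \(B=S/T\) is a non-negatively curved Alexandrov space and in fact an orbifold-free Riemannian submersion base. It is therefore a closed non-negatively curved manifold of dimension \(\dim S/T\). The homotopy sequence of the principal bundle \(T/H\to S\to B\) gives
\[\pi_1(T/H)\to\pi_1(S)\to\pi_1(B)\to 1,\]
and the image of \(\pi_1(T/H)\) is central. For \(\dim B\le 3\), closed non-negatively curved manifolds are classified (Hamilton, and the low-dimensional classification). Their fundamental groups contain abelian subgroups of index bounded by a universal constant: they are finite groups of the kind occurring in \(S^3\)-quotients, flat-manifold groups, or groups of \(S^2\times\R\)-type. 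Consequently \(\pi_1(S)\), and hence \(\pi_1(M)\), is, up to bounded index, a central extension of a group that is abelian up to bounded index.

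Step 3: control the central extension. A central extension \(1\to Z\to G\to A\to1\) with \(A\) abelian is nilpotent of step at most \(2\). The commutator pairing \(A\times A\to Z\) is bilinear, and I want it to vanish on a bounded-index subgroup. For \(\dim B\le 3\), I would argue as follows. When \(A\) has rank at most \(3\), comparing with the torus part (the flat factor of the Cheeger–Gromoll splitting of the universal cover of \(M\)) shows that a bounded-index subgroup is abelian. Here I use that \(\pi_1(M)\) of a non-negatively curved manifold is virtually abelian with bounded-index free abelian part, and that its nilpotent part has bounded index by Kapovitch–Petrunin–Tuschmann. For part (a), I would combine this with the Bieberbach-type bound: a virtually abelian group whose bounded-index subgroup is 2-step nilpotent is in fact abelian of bounded index. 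For \(\dim B=4\), the classification of non-negatively curved \(4\)-manifolds is not available. Instead, one only gets that \(\pi_1(B)\) has a bounded-index abelian subgroup, by the Bru\`e–Naber–Semola result in dimensions up to four, cited above. The central extension then yields a two-step nilpotent subgroup of bounded index, which is (b). All the index bounds depend only on \(\dim B\le m\) and on the Kapovitch–Petrunin–Tuschmann constant, so they are uniform in \(m\).
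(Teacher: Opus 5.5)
Your architecture matches the paper's: reduce to \(\pi_1(S)\), use the central extension from the principal \(T/H\)-bundle \(S\to S/T\) over a nonnegatively curved base, and quote Bru\`e--Naber--Semola when \(\dim S/T=4\). But the step you flag as the main obstacle is precisely the paper's main new ingredient, and you give no working argument for it. Wilking's connectedness lemma needs positive curvature. In nonnegative curvature \(\pi_1(S)\to\pi_1(M)\) need not be onto: for the circle rotating the \(S^2\)-fibres about the \(z\)-axis in the mapping torus of the reflection \(z\mapsto -z\) of \(S^2\), the fixed set is a single circle whose image has index \(2\) in \(\pi_1=\mathbb{Z}\). The Alexandrov structure of \(M/T\) does not bound this index either. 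What the paper does (Lemma~\ref{sec:main-technical-lemma} combined with Gromov's Betti number estimate) is the following. Since there are finitely many orbit types, pick \(S^1\subset T\) with \(S\) a component of \(M^{S^1}\) (or \(S=M\)), and lift this circle action to \(\tilde M\), where it commutes with the deck group. By the splitting theorem \(\tilde M\cong F\times\R^k\) with \(F\) closed and nonnegatively curved, so localization in equivariant cohomology gives at most \(\dim H^*(F;\mathbb{Q})\) fixed point components upstairs. The deck group permutes them, and the stabilizer of a component lying over \(S\) is the image of \(\pi_1(S)\to\pi_1(M)\). Gromov's estimate for \(F\) then bounds this index by \(C(m)\). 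Without this, nothing you prove about \(\pi_1(S)\) reaches \(\pi_1(M)\), in (a) or (b); with it, your argument for (b) is exactly the paper's.

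Your Step 3 for (a) rests on false claims. A virtually abelian group with a bounded-index two-step nilpotent subgroup need not have an abelian subgroup of bounded index. The finite Heisenberg groups \(H_3(\mathbb{Z}_k)\) are finite and two-step nilpotent, yet every abelian subgroup has index at least \(k\); these are the groups appearing in the Bru\`e--Naber--Semola Ricci examples. Likewise, the finite-index free abelian subgroup of \(\pi_1\) of a nonnegatively curved manifold has no uniform index bound (lens spaces). The Euclidean factor of \(\tilde M\) also sees nothing of the part of \(\pi_1\) acting on the compact factor, which is where the difficulty lies. You need input from \(S\) itself. For \(\dim S/T=2\), the paper uses that a circle bundle over \(T^n\) with nonnegatively curved total space is a torus (Lemma~\ref{sec:almost-non-negative-5}, applied along a tower of circle bundles). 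For \(\dim S/T=3\), it uses that \(\tilde S\) splits isometrically as \(N\times\R^k\) with \(N\) a point, \(S^2\) or \(S^3\), followed by the Bru\`e--Naber--Semola argument.

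Alternatively, your central-extension route can be repaired. If \(\pi_2(S/T)=0\), then \(\pi_1(T/H)\cong\mathbb{Z}^r\) injects into \(\pi_1(S)\) as a torsion-free central subgroup \(Z\). The preimage \(\tilde A\) of a bounded-index abelian subgroup of \(\pi_1(S/T)\) satisfies \([\tilde A,\tilde A]\subset Z\). Moreover \(\tilde A\) is virtually abelian by Cheeger--Gromoll, since \(S\) is totally geodesic and hence nonnegatively curved. So \([x,y]^{n^2}=[x^n,y^n]=1\) for suitable \(n\), forcing \([x,y]=1\). If \(\pi_2(S/T)\neq 0\), then \(\pi_1(S/T)\) has a cyclic subgroup of index at most \(2\), whose preimage is abelian. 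The torsion-freeness of \(Z\) is the point, and it is absent from your argument.
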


The conditions on the dimension of \(S/T\) are for example satisfied, if the action on \(M\) has only finite isotropy groups and cohomogeneity at most three or four, respectively, or if there is a positive dimensional isotropy group in \(M\) and the cohomogeneity of the action on \(M\) is at most four or five, respectively.

The main new ingredient in the proofs of Theorems~\ref{sec:introduction-2} and~\ref{sec:introduction-6} is an upper bound for the index of the image of the natural map \(\pi_1(F)\rightarrow \pi_1(M)\), where \(F\) is an \(S^1\)-fixed point component in the \(S^1\)-manifold \(M\) in terms of the Betti numbers of the universal covering of \(M\).
Note that a similar bound for the case that \(M\) is closed and has finite fundamental group has been given by Rong \cite{MR1713297}.
However we do not assume that \(\pi_1(M)\) is finite.

If \(M\) is (almost) non-negatively curved then the sum of these Betti numbers is bounded above
by a constant which only depends on the dimension of the manifold.
This follows from Gromov's Betti number estimate \cite{gromov_curvature_1981} for (almost) non-negatively curved manifolds and the fibration theorem of Kapovitch, Petrunin and Tuschmann \cite{MR2630041} for manifolds with almost non-negative sectional curvature.

The proofs of these theorems are then completed by studying the fundamental groups of manifolds \(S\) with torus actions of cohomogeneity at most four and only one orbit type.
For the case of \(\dim S/T>1\) arguments from the proof of the result of  Bru\`e, Naber and Semola \cite{MR5013747} enter here.

Since by the splitting theorem \cite{MR0303460} the universal covering of a closed non-negatively Ricci curved Riemannian manifold has the homotopy type of a closed manifold, parts of the argument apply also in the setting of non-negatively Ricci curved manifolds.
This leads to the following theorem.

\begin{theorem}
  \label{sec:introduction}
  Let \(M\) be a closed \(S^1\)-manifold with at least one isolated fixed
  point which admits a (not necessarily invariant) metric of non-negative
  Ricci-curvature.

  Then the fundamental group of \(M\) is finite.
\end{theorem}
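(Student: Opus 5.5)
The plan is to combine the Cheeger--Gromoll splitting theorem \cite{MR0303460} with an argument about the \(S^1\)-action near the isolated fixed point. By the splitting theorem, the universal covering \(\tilde M\) of \(M\) with the lifted metric splits isometrically as \(\R^k\times N\), where \(N\) is a compact simply connected manifold. Moreover, \(\pi_1(M)\) contains a finite index subgroup isomorphic to \(\mathbb{Z}^k\). Hence \(\pi_1(M)\) is finite if and only if \(k=0\). Suppose for contradiction that \(k\geq 1\). After passing to a finite cover \(\hat M\) of \(M\), we can assume that \(\pi_1(\hat M)\) is torsion free and virtually \(\mathbb{Z}^k\), and that \(\hat M\) is homotopy equivalent to a bundle over \(T^k\) with fiber \(N\); in particular \(\hat M\) maps to \(T^k\) inducing a surjection on \(\pi_1\) with finite kernel, up to finite index.

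Next we lift the \(S^1\)-action. Some finite cover \(S^1\to S^1\) of the circle acts on \(\hat M\) covering the given action, and each preimage of the isolated fixed point is again a fixed point. It is isolated, because the covering is a local diffeomorphism and is equivariant. So \(\hat M\) carries an \(S^1\)-action with an isolated fixed point \(p\). The key topological input is the following statement, essentially the index estimate on \(\pi_1(F)\to\pi_1(M)\) mentioned in the introduction: if \(F\) is a fixed point component, then the image of \(\pi_1(F)\to\pi_1(\hat M)\) has finite index, bounded in terms of the Betti numbers of the universal cover. For \(F=\{p\}\) this image is trivial, so the estimate forces \(\pi_1(\hat M)\) to be finite, provided the Betti numbers of \(\tilde M\simeq N\) are finite. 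They are, since \(N\) is compact.

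I will sketch why that index estimate holds, since this is the main obstacle. Let \(\Gamma=\pi_1(\hat M)\) and take an intermediate cover \(M'\to\hat M\) of degree \(d\), arbitrarily large, which is possible because \(\Gamma\) is infinite and residually finite (it is virtually \(\mathbb{Z}^k\)). The \(S^1\)-action lifts to \(M'\), possibly after a finite cover of the circle, which changes nothing since \(\Gamma\) acts freely. Because the image of \(\pi_1(\{p\})\) is trivial, \(p\) has \(d\) distinct isolated fixed point preimages in \(M'\). Localization for circle actions, applied with \(\mathbb{Q}\)- or \(\mathbb{F}_q\)-coefficients for a suitable prime, gives
\[
\dim H^*(M'^{S^1})\leq \dim H^*(M'),
\]
so \(\dim H^*(M')\geq d\). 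On the other hand, \(M'\) is homotopy equivalent to a finite cover of the \(N\)-bundle over \(T^k\), and the Serre spectral sequence gives
\[
\dim H^*(M')\leq 2^k\dim H^*(N),
\]
which is independent of \(d\). Letting \(d\to\infty\) gives a contradiction, so \(k=0\) and \(\pi_1(M)\) is finite.

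The delicate points are these. First, the homotopy model of \(\hat M\) (or of \(M'\)) as an \(N\)-bundle over a torus must hold for every finite cover used. This follows from the splitting \(\tilde M=\R^k\times N\) together with the Bieberbach-type structure of the deck group, since finite covers of \(M\) inherit non-negative Ricci curvature. Second, the localization inequality is used for a noncompact-free but closed manifold. This is fine because \(M'\) is closed, as a finite cover of a closed manifold. Of the two, I expect the uniform Betti number bound for the finite covers \(M'\) to be the main point requiring care.
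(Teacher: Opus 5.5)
Your proof is correct, but it takes a different route from the paper.

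\textbf{The paper's route.} The theorem follows in one step from Lemma~\ref{sec:main-technical-lemma}. Because there is a fixed point, the \(S^1\)-action lifts to the universal cover \(\tilde M\) and commutes with the deck group. By the splitting theorem \(\tilde M\simeq N\), so \(\tilde M\) has finite total rational cohomology. Localization applied directly to the noncompact \(\tilde M\) then gives at most \(\dim H^*(\tilde M;\mathbb{Q})\) fixed point components. The deck group permutes these components, and the stabilizer of a component over \(F\) is the image of \(\pi_1(F)\). For \(F=\{p\}\) the deck group acts freely on the lifts of \(p\), so \(|\pi_1(M)|\le \dim H^*(\tilde M;\mathbb{Q})\).

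\textbf{Your route.} You apply localization only to closed finite covers \(M'\) of degree \(d\). This forces you to supply a uniform bound on \(\dim H^*(M';\mathbb{Q})\), and you get it from finer structure:
\begin{itemize}
\item Choose \(\hat M\) with \(\pi_1(\hat M)\cong\mathbb{Z}^k\). This group acts by translations on the \(\R^k\)-factor, since an abelian crystallographic group is its own translation lattice. Hence \(\hat M\) and all its finite covers are honest \(N\)-bundles over \(k\)-tori.
\item The Serre spectral sequence gives \(2^k\dim H^*(N;\mathbb{Q})\) even with twisted coefficients, because of the cellular bound \(\dim H^p(T^k;\mathcal V)\le\binom{k}{p}\dim V\).
\item \(\mathbb{Z}^k\) has subgroups of arbitrarily large finite index.
\end{itemize}

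\textbf{Comparison.} Your approach buys that localization is only ever used on closed manifolds, which sidesteps any question of applying it to a noncompact space. The paper's approach is shorter and gives the explicit bound \(|\pi_1(M)|\le\dim H^*(\tilde M;\mathbb{Q})\) directly. It is also more general: it needs only that \(\tilde M\) has the homotopy type of a compact manifold, with no bundle model over a torus. It also handles arbitrary fixed point components \(F\), and this is exactly what is reused for the almost non-negatively curved results.

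\textbf{Two small corrections.}
\begin{itemize}
\item No finite cover of the circle is needed. The lifted \(\R\)-action on \(\tilde M\) fixes every lift of \(p\), so \(1\in\R\) acts as a deck transformation with a fixed point, hence trivially. Thus \(S^1\) itself acts on every cover.
\item The triviality of the image of \(\pi_1(\{p\})\) is not what produces the \(d\) preimages of \(p\); every point has \(d\) preimages. What matters is that they are fixed and isolated, which you did establish.
\end{itemize}
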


  It is also natural to ask whether the conclusion of Conjecture~\ref{sec:introduction-4} also holds for closed Riemannian manifolds of non-negative Ricci curvature.
  However examples which show that this is not the case have been given by Bru\`e, Naber and Semola \cite{brue_compact_2026}.
  We note in Section~\ref{sec:example-BNS} that the examples constructed in \cite{brue_compact_2026} admit effective \(T^2\)-actions.
  As an application of our methods we show that these examples do not admit almost non-negative sectional curvature.
This also shows that there is no straightforward analogue of Theorem~\ref{sec:introduction-2} for non-negatively Ricci curved manifolds.
  
This paper is structured as follows.
In Section~\ref{sec:main-technical-lemma-1} we prove our main technical lemma.
In Section~\ref{sec:almost-non-negative-2} we prove slightly more general versions of Theorem~\ref{sec:introduction-2}  and Corollary~\ref{sec:introduction-1} and Theorem \ref{sec:introduction-6}.
In Section~\ref{sec:non-negative-ricci} we prove a version of Theorem~\ref{sec:introduction} which is slightly more general and some corollaries.
Finally in Section~\ref{sec:examples} we discuss the construction of Example~\ref{sec:introduction-3} and other examples in more detail.

I would like to thank Christoph Böhm and Burkhard Wilking for discussions on the subject of this paper and Wilderich Tuschmann for comments on an earlier version of this paper.

The research for this paper was funded by the Deutsche Forschungsgemeinschaft (DFG, German Research Foundation) under Germany's Excellence Strategy EXC 2044/2 –390685587, Mathematics Münster: Dynamics–Geometry–Structure and through CRC 1442, Geometry: Deformations and Rigidity at University of Münster.

\section{The main technical lemma}
\label{sec:main-technical-lemma-1}

Our main technical lemma is a version of a lemma from \cite{MR1713297}.
It has been shown in \cite{MR1713297} for the case that \(M\) is closed and has finite fundamental group.
However, we do not make any assumptions on \(\pi_1(M)\).

\begin{lemma}
  \label{sec:main-technical-lemma}
  Let \(M\) be a \(S^1\)-manifold such that \(M^{S^1}\neq \emptyset\)
  and the universal covering \(\tilde{M}\) of \(M\) has the homotopy type of a
  compact manifold. Let, moreover, \(F\) be a component of
  \(M^{S^1}\).
  Then the image of the natural map \(\pi_1(F)\rightarrow \pi_1(M)\)
  has finite index at most \(\dim H^*(\tilde{M};\mathbb{Q})\).
\end{lemma}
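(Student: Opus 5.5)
Let \(\Gamma=\pi_1(M)\) and \(\pi\colon \tilde{M}\to M\) the universal covering. The plan is to lift the \(S^1\)-action to \(\tilde{M}\) and count fixed point components via Smith theory / localization.

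\textbf{Lifting the action.} Since \(M^{S^1}\neq\emptyset\), the \(S^1\)-action lifts to an action of \(S^1\) (not just of a covering circle) on \(\tilde{M}\) commuting with the deck group \(\Gamma\). Concretely, fix \(x_0\in F\). The orbit map of \(x_0\) is constant, so the loop \(t\mapsto t\cdot x_0\) is null-homotopic. Hence the lifted \(\mathbb{R}\)-action on \(\tilde{M}\) fixes the lifts of \(x_0\) and factors through \(S^1\). Lifts of actions commute with deck transformations, so \(\Gamma\) permutes the components of \(\tilde{M}^{S^1}\). We have \(\pi^{-1}(F)\subset\tilde{M}^{S^1}\), since a point over \(F\) has orbit in the discrete fibre, hence is fixed by the connected \(S^1\). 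Each component of \(\pi^{-1}(F)\) is a component of \(\tilde{M}^{S^1}\), because \(\pi\) maps \(\tilde{M}^{S^1}\) into \(M^{S^1}\).

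\textbf{Counting components.} The components of \(\pi^{-1}(F)\) are in bijection with \(\Gamma/\mathrm{im}(\pi_1(F)\to\Gamma)\), by standard covering theory applied to the restriction \(\pi^{-1}(F)\to F\). It therefore suffices to show that \(\tilde{M}^{S^1}\) has at most \(\dim H^*(\tilde{M};\mathbb{Q})\) components. For this I use the Borel localization inequality
\[\sum_i \dim H^*(\tilde M^{S^1}_i;\mathbb{Q})\le \dim H^*(\tilde M;\mathbb{Q}).\]
Each component contributes at least \(1\), which gives the bound. Since the index is then bounded by a finite number, it is in particular finite.

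\textbf{Main obstacle.} The localization inequality needs finiteness hypotheses: finite-dimensional total cohomology of \(\tilde{M}\) and a finite-dimensional \(S^1\)-space. Here \(\tilde{M}\) may be noncompact and is only homotopy equivalent to a compact manifold. I would use the standard version valid for paracompact \(S^1\)-spaces of finite cohomological dimension with \(\dim H^*(X;\mathbb{Q})<\infty\). Finite cohomological dimension holds because \(\tilde{M}\) is a manifold, and finite total rational cohomology holds by the homotopy equivalence. The delicate point is that \(\tilde{M}^{S^1}\) may have infinitely many components a priori. So I would argue by contradiction: suppose there are more than \(N=\dim H^*(\tilde{M};\mathbb{Q})\) components. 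Localization, in the form for finitistic spaces via Čech cohomology, gives an isomorphism
\[H^*_{S^1}(\tilde{M})[u^{-1}]\cong H^*(\tilde{M}^{S^1})\otimes\mathbb{Q}[u,u^{-1}].\]
Its left side has rank at most \(N\) over \(\mathbb{Q}[u,u^{-1}]\), because it is a localization of a module generated by \(N\) elements via the Serre spectral sequence. Its right side has rank at least the number of components, since \(H^0\) of the fixed set contains the locally constant functions. This gives the contradiction. Checking that these localization theorems apply to the noncompact manifold \(\tilde{M}\), which is finitistic being finite-dimensional, is the step I expect to need the most care.
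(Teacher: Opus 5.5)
Your proposal is correct and follows essentially the same route as the paper. You lift the circle action to $\tilde M$ so that it commutes with the deck group, bound the number of fixed point components in $\tilde M$ by $\dim H^*(\tilde M;\mathbb{Q})$ via localization (the paper simply cites Allday--Puppe, Corollary 3.1.14), and identify the index of $\mathrm{im}(\pi_1(F)\to\pi_1(M))$ with the number of components of $\pi^{-1}(F)$, which is the size of the deck-group orbit of a fixed component. Your additional points, that the lift factors through $S^1$ and that localization applies to the noncompact, finitistic $\tilde M$ even with a priori infinitely many fixed components, are valid and fill in details the paper leaves to the citation.
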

\begin{proof}
  Since there are \(S^1\)-fixed points the \(S^1\)-action lifts to an
  action on the universal covering of \(M\).
  Since the universal covering of \(M\) has the homotopy type of a
  compact manifold, it follows from localization in equivariant
  cohomology \cite[Corollary 3.1.14]{MR1236839} that the lifted action has only a finite number \(\leq \dim H^*(\tilde{M};\mathbb{Q})\) of fixed
  point components.
  Since the \(S^1\)-action on \(\tilde{M}\) commutes with the action
  of the deck transformation group, the subgroup of the deck
  transformation group which leaves one of the fixed point components
  invariant has finite index.
  This subgroup can be identified with the image of the natural map
  \(\pi_1(F)\rightarrow \pi_1(M)\) after identifying the deck
  transformation group with \(\pi_1(M)\).
\end{proof}

For the proof of Theorem~\ref{sec:introduction-2}, we also need the following lemma.

\begin{lemma}
\label{sec:main-technical-lemma-2}
  Let \(M\) be a connected manifold on which a torus \(T\) acts.
  Assume there is a point \(x\in M\) whose isotropy group \(T_x\) is finite.
  Then a subgroup of index \(|T_x|<\infty\) of the fundamental group of the orbit \(Tx\) of \(x\) acts trivially on \(\pi_2(M)\).
\end{lemma}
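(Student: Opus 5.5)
We consider the orbit map \(\phi\colon T\rightarrow M\), \(t\mapsto tx\). It factors as \(T\rightarrow T/T_x\cong Tx\hookrightarrow M\). Since \(T_x\) is finite, \(T\rightarrow T/T_x\) is a finite covering with fibre \(T_x\). So \(\pi_1(T)\) maps injectively onto a subgroup of \(\pi_1(Tx)\) of index exactly \(|T_x|\); here we use that \(\pi_0(T)\) is trivial, so the long exact sequence gives \(1\to\pi_1(T)\to\pi_1(Tx)\to T_x\to 1\). It therefore suffices to show that the image of \(\pi_1(T)\) in \(\pi_1(M)\) (under \(\phi_*\)) acts trivially on \(\pi_2(M)\). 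Because the action of \(\pi_1(Tx)\) on \(\pi_2(M)\) is through the inclusion \(Tx\rightarrow M\), it is enough that every element of \(\pi_1(M)\) of the form \(\phi_*(\gamma)\), \(\gamma\in\pi_1(T,e)\), acts trivially. We take \(x\) as basepoint.

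The key step is the standard argument that orbits of a connected group action give central elements acting trivially on higher homotopy, as in Gottlieb's theorem. Let \(\gamma\colon S^1\rightarrow T\) be a loop at \(e\), and let \(f\colon (S^2,*)\rightarrow (M,x)\) represent a class in \(\pi_2(M,x)\). Define
\[H\colon S^1\times S^2\rightarrow M,\qquad H(s,p)=\gamma(s)\cdot f(p).\]
For each \(s\), \(H(s,\cdot)\) is a sphere freely homotopic to \(f\) through the maps \(\gamma(s')f\). The basepoint \(*\) traces the loop \(s\mapsto\gamma(s)x=\phi(\gamma)\). Restricted to \(S^1\times\{*\}\cup\{1\}\times S^2\), the map is \(\phi(\gamma)\vee f\). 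The existence of the extension \(H\) to \(S^1\times S^2\) means that the Whitehead-product/action obstruction vanishes. Equivalently, following the based sphere \(f\) along the loop \(\phi(\gamma)\) returns \(f\) itself, since \(H(1,\cdot)=H(0,\cdot)=f\). By the definition of the \(\pi_1\)-action via free homotopies of based spheres along loops, this shows \(\phi_*(\gamma)\cdot[f]=[f]\).

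The only point requiring care, and what I expect to be the main technical check, is matching this free homotopy with the standard definition of the \(\pi_1\)-action, including basepoint conventions. Concretely, \(s\mapsto H(s,\cdot)\) is a homotopy of maps \(S^2\to M\) from \(f\) to \(f\) whose basepoint track is \(\phi(\gamma)\). Hence \([f]=\phi(\gamma)_\#[f]\) by the well-known characterisation of the action; see for instance Hatcher, Section 4.1. Combining this with the index computation, the subgroup \(\pi_1(T)\subset\pi_1(Tx)\) of index \(|T_x|\) acts trivially on \(\pi_2(M)\), as claimed.
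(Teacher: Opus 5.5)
Your proof is correct. It shares the index count with the paper: the $|T_x|$-sheeted covering $T\to Tx$ exhibits $\pi_1(T)$ as an index-$|T_x|$ subgroup of $\pi_1(Tx)$. It differs in how you show that the image of $\pi_1(T)$ acts trivially.

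The paper works upstairs. It lifts the $T$-action to an action of the universal covering group $\tilde T$ on $\tilde M$, and realizes the deck transformations coming from $\pi_1(T)$ as elements of the connected group $\tilde T$. Such elements are homotopic to the identity and so act trivially on $H_2(\tilde M;\mathbb{Z})$. This is transferred to $\pi_2(M)$ via $\pi_2(M)\cong\pi_2(\tilde M)\cong H_2(\tilde M;\mathbb{Z})$ (Hurewicz).

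You stay downstairs and use Gottlieb's argument: $H(s,p)=\gamma(s)f(p)$ is a free self-homotopy of $f$ whose basepoint traces the orbit loop $\phi\circ\gamma$, so that loop fixes $[f]$. Either sign convention for the free-homotopy characterisation gives the same conclusion, since both ends are $f$.

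Your route is more elementary. It needs neither the lift of the action to $\tilde M$ nor the Hurewicz isomorphism, and it avoids identifying deck transformations with basepoint-changing maps. Because it never passes through homology, it shows directly that orbit loops lie in the Gottlieb group and hence act trivially on every $\pi_n(M)$, not just $\pi_2(M)$. The paper's phrasing has the advantage of matching its proof of Lemma~\ref{sec:main-technical-lemma}, which uses the same lifted action on $\tilde M$ and the same identification of the deck group with $\pi_1(M)$.
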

\begin{proof}
  Let \(\tilde{M}\) be the universal covering of \(M\).
  Then the action of \(\pi_1(M)\) on \(\pi_2(M)\) can be identified with the action of the deck transformation group of this covering on \(H_2(\tilde{M};\mathbb{Z})\) by the Hurewicz theorem.

  The \(T\)-action on \(M\) lifts to an action of the universal covering group \(\tilde{T}\) of \(T\) on \(\tilde{M}\).
  The action of \(\pi_1(T)\) induced from \[\pi_1(T)\rightarrow \pi_1(Tx)\rightarrow \pi_1(M)\]
  and the action of the deck transformation group of \(\tilde{M}\rightarrow M\) can be identified with the action of a subgroup of \(\tilde{T}\) on \(\tilde{M}\).
  Since the latter group is connected, the action of \(\pi_1(T)\) on \(\pi_2(M)\) is trivial.
  Moreover, because \(T_x\) is finite, the natural map \(T\rightarrow Tx\) is a finite \(|T_x|\)-sheeted covering.
  This proves the claim.
\end{proof}

\section{Almost non-negative sectional curvature}
\label{sec:almost-non-negative-2}

In this section we prove Theorem \ref{sec:introduction-2} and Theorem~\ref{sec:introduction-6}.
We start with a version of Theorem~\ref{sec:introduction-2} which only deals with the case \(\dim S/T=0\).

\begin{theorem}
  \label{sec:almost-non-negative}
  For all \(m\in \mathbb{N}\), there is a constant \(0<C(m)<\infty\), such that the fundamental group of every almost non-negatively curved closed manifold \(M\) of dimension \(m\), which is homotopy equivalent to a closed manifold \(N\) such that \(N\) admits an action of a torus \(T\) which has a minimal closed stratum \(S\) consisting of a single orbit, contains an abelian subgroup \(A\) of index \(<C(m)\).
  
  Moreover, there is a finite index subgroup of \(A\) which acts trivially on \(\pi_2(M)\).
  If for some \(x\in S\) the isotropy group of \(x\) is connected then \(A\) acts trivially on \(\pi_2(M)\).
\end{theorem}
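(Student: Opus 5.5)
Since the conclusion only involves the homotopy type of \(M\), I will work with \(N\) and transport the results via the homotopy equivalence \(M\simeq N\). This identifies \(\pi_1(M)\cong\pi_1(N)\) equivariantly with \(\pi_2(M)\cong\pi_2(N)\), and gives \(\tilde M\simeq\tilde N\). By Gromov's Betti number estimate combined with the fibration theorem of Kapovitch--Petrunin--Tuschmann, \(\dim H^*(\tilde M;\mathbb{Q})\le B(m)\) for a constant \(B(m)\) depending only on \(m\). Moreover, \(\tilde N\simeq\tilde M\) has the homotopy type of a compact manifold: by KPT, a finite cover of \(M\) fibres over a nilmanifold with simply connected fibre, so \(\tilde M\) is homotopy equivalent to that compact fibre.

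\textbf{Step 1: reduce to an \(S^1\)-action with fixed points.} Let \(S=Tx\) be the minimal stratum, with \(H=(T_x)^0\). Choose a generic circle \(S^1\subset H\), i.e. one such that the fixed point component \(F\) of \(N^{S^1}\) containing \(x\) coincides with the component of \(N^{H}\) through \(x\). Since \(S\) is a minimal closed stratum, \(F\) is a \(T\)-invariant manifold on which \(T/H\) acts with only finite isotropy near \(S\). Minimality should then force \(F\) to be \(T\)-equivariantly a neighbourhood of the single orbit \(S\), hence \(F=S\). Roughly, the points of \(F\) near \(S\) have identity component of isotropy exactly \(H\), so the open \(H\)-stratum inside \(F\) is open in \(F\). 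I would need to check carefully that \(F\) is indeed just the orbit; otherwise I would use \(F\) in place of \(S\), since it still has identity isotropy component \(H\) throughout.

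\textbf{Step 2: apply Lemma~\ref{sec:main-technical-lemma}.} Lemma~\ref{sec:main-technical-lemma} shows that the image of \(\pi_1(S)\to\pi_1(N)\) has index at most \(B(m)\). Now \(S\cong T/T_x\) is a torus, so \(\pi_1(S)\) is abelian and so is its image \(A\). This gives the abelian subgroup of index \(\le B(m)=:C(m)\).

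\textbf{Step 3: the action on \(\pi_2\).} Consider the \(T/H\)-action on \(N\), or equivalently a complementary subtorus \(T'\subset T\) with \(T'\cap T_x\) finite. Then \(T'x=S\), and Lemma~\ref{sec:main-technical-lemma-2} shows that a subgroup of \(\pi_1(S)\) of index \(|T'_x|\) acts trivially on \(\pi_2(N)\). Its image in \(A\) is of finite index in \(A\). If \(T_x\) is connected, then \(T_x=H\) and \(T=H\times T'\) splits with \(T'\cap T_x=1\). In that case \(T'\to S\) is a homeomorphism, and all of \(\pi_1(S)\) acts trivially, so \(A\) acts trivially.

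The main obstacle I expect is Step 1: identifying the fixed component \(F\) with the orbit \(S\), so that Lemma~\ref{sec:main-technical-lemma} yields a bounded-index image of a torus fundamental group. This requires choosing the circle so that its fixed set near \(x\) agrees with that of \(H\), and then using minimality of \(S\).
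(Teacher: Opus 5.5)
Your argument is essentially the paper's proof. You choose a circle in $H=(T_x)^0$ with the same fixed set as $H$, identify the resulting fixed component with the torus orbit $S$, and bound the index of the image of $\pi_1(S)\to\pi_1(N)$ via Lemma~\ref{sec:main-technical-lemma} together with Kapovitch--Petrunin--Tuschmann and Gromov. For $\pi_2$ you use a complementary subtorus and Lemma~\ref{sec:main-technical-lemma-2}.

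Two small points need tidying.

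\textbf{Step 1.} Your worry resolves along the lines you sketch. Near $x$, points of $F$ have isotropy identity component exactly $H$. So a connected neighbourhood of $x$ in $F$ lies in the open stratum whose closure is $S=Tx$, which makes $Tx$ open in $F$. Since $Tx$ is compact, hence closed, connectedness of $F$ gives $F=Tx$. Your fallback of replacing $S$ by $F$ would not have produced an abelian group anyway, so it is good that it is not needed.

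\textbf{The case $H$ trivial.} Here Step 1 breaks down, because there is no circle in $H$ to choose. This case has to be treated separately. The same open--closed argument shows $N=Tx$ is a torus. Then $\pi_1(N)$ is abelian and $\pi_2(N)=0$, so all claims hold trivially. This is the paper's ``or $S=N$''.
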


Here for a connected subgroup \(H\) of \(T\) we call the connected components of the set
\[\{x\in X;\;(T_x)^0=H\}\]
of those points in \(X\), whose isotropy group has identity component \(H\), the open \(H\)-strata. Their closures are the closed strata.
Note that the closed strata are ordered by inclusion.

For the proof we need the following result due to Kapovitch, Petrunin, and Tuschmann.

\begin{theorem}[{\cite{MR2630041}}]
  \label{sec:almost-non-negative-1}
Let \(M\) be an almost non-negatively curved manifold.
Then a finite cover of \(M\) is the total space of a fiber bundle
\[F\rightarrow \tilde{M}\rightarrow N\]
over a nilmanifold \(N\) with a simply connected fiber \(F\). Moreover, the fiber \(F\) is
almost nonnegatively curved in a generalized sense for which Gromov's Betti number estimate holds.
\end{theorem}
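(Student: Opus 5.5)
This is a cited result from \cite{MR2630041}, and the argument I would give follows the collapsing strategy of Fukaya--Yamaguchi and Kapovitch--Petrunin--Tuschmann.

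\emph{Step 1: a virtually nilpotent fundamental group.} Start from the sequence \(g_i\) with \(K_{g_i}\geq -1/i\) and \(\diam\leq 1\). By \cite{MR1185120}, together with the improvement in \cite{MR2630041}, \(\pi_1(M)\) contains a nilpotent subgroup of finite index. Pass to the corresponding finite cover \(M'\), so that \(\Gamma=\pi_1(M')\) is nilpotent. Passing to a further finite cover, I would also arrange that \(\Gamma\) is torsion free, so that \(\Gamma\) is a lattice in its Malcev completion \(G\), a simply connected nilpotent Lie group. Set \(N=\Gamma\backslash G\).

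\emph{Step 2: build the bundle \(\tilde{M}\rightarrow N\) by equivariant collapse.} Lift the metrics to the universal cover \(\hat{M}\) of \(M'\), and consider the pointed equivariant Gromov--Hausdorff limit of \((\hat{M},g_i,\Gamma)\), suitably rescaled. The group \(\Gamma\) should converge to a closed group of isometries of the limit whose identity component is a nilpotent Lie group \(G\) acting with compact quotient. Near the limit, the Fukaya fibration theorem (in its equivariant version for lower curvature bounds, using Perelman's stability and Yamaguchi's fibration theorem) should give a \(\Gamma\)-equivariant fibration \(\hat{M}\rightarrow G\) for large \(i\). Its fiber \(F\) is then the fiber of an honest bundle \(M'\rightarrow N\). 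Simple connectivity of \(F\) would follow from the long exact homotopy sequence, once the map \(\pi_1(M')\rightarrow\pi_1(N)=\Gamma\) is checked to be an isomorphism, since \(\pi_2(N)=0\).

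\emph{Step 3: the Betti number estimate on the fiber.} The fiber \(F\), with the induced metrics rescaled so that its diameter is normalized, should have a lower curvature bound tending to \(0\) in the appropriate generalized sense, for instance as a fiber of a collapsing almost Riemannian submersion in an Alexandrov-type setting. Gromov's proof \cite{gromov_curvature_1981} uses only critical point theory for distance functions, Toponogov comparison and a covering and packing argument. All of these survive for such spaces, giving a bound on \(\sum b_k(F)\) depending only on \(m\).

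I expect Step 2 to be the main obstacle. The difficulty is to control the limit group action well enough that the orbit of the identity component is really the whole nilpotent group and that the fibration is \(\Gamma\)-equivariant. This is where the rescaling argument of Kapovitch--Petrunin--Tuschmann (gradient pushing and generalized Margulis lemma) does the hard work. If controlling the limit this way fails, I would first try induction on the length of the collapse, collapsing one scale at a time.
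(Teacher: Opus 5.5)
The paper gives no proof of this statement: it is quoted from \cite{MR2630041}. It is used only through the consequence that the universal cover of \(M\) is diffeomorphic to \(F\times\mathbb{R}^k\) with \(\sum_i b_i(F)\le C(m)\). Judged as a proof, your outline has a genuine gap. Step~1 is fine, but Step~2 is the whole content of the theorem, and the mechanism you propose there does not work.

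\emph{The limit does not carry a \(\Gamma\)-action.} Equivariant Gromov--Hausdorff convergence of \((\hat M,g_i,\Gamma)\) gives only approximate maps from \(\Gamma\) to the limit group, not an action of \(\Gamma\) on the limit. No available fibration theorem for lower curvature bounds yields a \(\Gamma\)-equivariant fibration \(\hat M\to G\). Yamaguchi's theorem maps onto the limit space, is not equivariant for a discrete group that merely converges to a Lie group, and its fibers are in general not simply connected.

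\emph{The limit group is not the Malcev completion.} You silently identify the Malcev completion from Step~1 with the identity component of the limit group, and these differ in general. Take the Heisenberg group with its integer lattice and \(g_\varepsilon=dx^2+dy^2+\varepsilon^2(dz-x\,dy)^2\). Then \(|K|\le\varepsilon^2\) and the diameter stays bounded. In the coordinates \((x,y,\varepsilon z)\) the universal covers converge smoothly to flat \(\mathbb{R}^3\), and \(\Gamma\) converges to \(\mathbb{Z}^2\times\mathbb{R}\), whose identity component is one-dimensional. If you normalize instead to \(\diam\to0\), with the left-invariant fields \(X,Y,Z=[X,Y]\) of lengths \(\delta,\delta,\delta^3\), the limit group is the abelian \(\mathbb{R}^3\). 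In both cases the right answer is \(F=\mathrm{pt}\), \(N=M\). The nilpotent structure of \(\Gamma\) is spread over several collapsing scales, and the multi-scale analysis you mention only as a fallback is where the work in \cite{MR2630041} lies. Similarly, ``once \(\pi_1(M')\to\pi_1(N)\) is checked to be an isomorphism'' just postpones the assertion that the fiber is simply connected.

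\emph{Step~3 rests on a false heuristic.} The metrics induced on the fibers of an (almost) Riemannian submersion need not satisfy any uniform lower curvature bound, since the Gauss equation involves the uncontrolled second fundamental form of the fibers. So there is no intrinsic sense in which \(F\) is almost nonnegatively curved. Your claim that Gromov's critical point, comparison and packing arguments survive on \(F\) is therefore unsupported. The generalized notion of \cite{MR2630041} is extrinsic. The lower curvature bound lives on complete metrics on \(F\times\mathbb{R}^k\); for the fiber in the theorem these are the lifted metrics on the universal cover of the finite cover. This is exactly how the paper uses the result in the proof of Theorem~\ref{sec:almost-non-negative}. Gromov's estimate is applied in that ambient manifold, which is homotopy equivalent to \(F\).
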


Note that the universal covering of a \(k\)-dimensional nilmanifold is diffeomorphic to \(\mathbb{R}^k\).

\begin{proof}[Proof of Theorem \ref{sec:almost-non-negative}] 
  By Theorem~\ref{sec:almost-non-negative-1}, the universal covering of \(M\) is diffeomorphic to \(F\times \mathbb{R}^k\), where the total Betti number of \(F\) is bounded from above by some constant \(C(m)\) depending only on the dimension of \(M\).

  Note that there is a minimal stratum \(S\)  of the torus action on \(N\) which is homeomorphic to a torus and therefore has abelian fundamental group.
  Moreover, \(S\) is a fixed point component of some non-trivial subtorus \(H\subset T\) (or \(S=N\)).
  We choose \(H\) maximal with this property.
  
  Since \(N\) is closed there are only finitely many \(T\)-orbit types in \(N\).
  Therefore there is an \(S^1\subset H\) such that \(N^{S^1}=N^H\).
  So the first claim follows from Lemma~\ref{sec:main-technical-lemma}.

  To prove the second claim, let \(H'\subset T\) be a complementary subtorus to \(H\).
  Then the isotropy group \(H'_x\) of every point \(x\in S\) is finite.
  Hence, by Lemma~\ref{sec:main-technical-lemma-2}, there is an abelian finite index subgroup of \(\pi_1(S)\) which acts trivially on \(\pi_2(M)\).
  Since by the first part of the proof \(\pi_1(S)\) has finite index in \(\pi_1(M)\) the second claim follows.

  If for \(x\in S\) the isotropy group \(T_x\) is connected, i.e. \(T_x=H\),
  then \(H'_x\) is the trivial group.
  So the third claim follows in the same way from Lemma~\ref{sec:main-technical-lemma-2} as the second.
\end{proof}

This proof in particular shows the following version of Corollary~\ref{sec:introduction-1}.

\begin{cor}
  \label{sec:almost-non-negative-3}
  If in the situation of Theorem~\ref{sec:almost-non-negative} \(N^{T}\) contains an isolated fixed point then the order of \(\pi_1(M)\) is bounded by \(C(m)\).
\end{cor}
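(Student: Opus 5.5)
The plan is to rerun the proof of Theorem~\ref{sec:almost-non-negative} with \(S=\{p\}\) an isolated \(T\)-fixed point of \(N\). Then \(\pi_1(S)\) is trivial, so the index of its image in \(\pi_1(N)\cong\pi_1(M)\) is the order \(|\pi_1(M)|\). The whole task is to bound that index using Lemma~\ref{sec:main-technical-lemma}.

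First I choose a circle \(S^1\subset T\) with \(N^{S^1}=N^T\). This is possible because \(N\) is closed, so only finitely many isotropy groups occur, and a generic circle avoids the finitely many proper closed subgroups \(T_x\neq T\). With this choice the point \(p\) is itself a component \(F=\{p\}\) of \(N^{S^1}\). Since \(\pi_1(F)=1\), Lemma~\ref{sec:main-technical-lemma} gives
\[|\pi_1(N)|\leq \dim H^*(\tilde N;\mathbb{Q}),\]
provided \(\tilde N\) has the homotopy type of a compact manifold.

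It remains to bound \(\dim H^*(\tilde N;\mathbb{Q})=\dim H^*(\tilde M;\mathbb{Q})\), using the homotopy equivalence \(M\simeq N\). By Theorem~\ref{sec:almost-non-negative-1}, \(\tilde M\cong F'\times\mathbb{R}^k\), so \(\tilde M\) is homotopy equivalent to \(F'\). Gromov's Betti number estimate applies to \(F'\) and gives \(\dim H^*(F';\mathbb{Q})\leq C(m)\). The same product description shows that \(\tilde N\) has the homotopy type of the compact manifold \(F'\), which is the hypothesis needed in the previous step.

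The main obstacle, mild as it is, is this last point: justifying that \(\tilde N\) has the homotopy type of a compact manifold, which we do through the fibration theorem rather than directly. Once that is in place, \(|\pi_1(M)|\leq C(m)\), with \(C(m)\) enlarged if necessary.
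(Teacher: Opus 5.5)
Your proposal is correct and is the paper's argument: the paper just takes \(S=\{x\}\) in the proof of Theorem~\ref{sec:almost-non-negative}. That amounts to choosing a circle with \(N^{S^1}=N^T\), applying Lemma~\ref{sec:main-technical-lemma} to the point component, and bounding \(\dim H^*(\tilde N;\mathbb{Q})=\dim H^*(\tilde M;\mathbb{Q})\) via Theorem~\ref{sec:almost-non-negative-1} and Gromov's estimate, all of which you write out explicitly, including the valid check that \(\tilde N\simeq\tilde M\simeq F'\) has the homotopy type of a compact manifold.
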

\begin{proof}
  If there is an isolated fixed point \(x\in N^T\), then in the proof of Theorem~\ref{sec:almost-non-negative} we can choose \(S=\{x\}\).
  Therefore the claim follows.
\end{proof}

Using a result of Böhm and Wilking we can show the following result.

\begin{cor}
  Let \(M\) be a closed manifold which admits an \(S^1\)-action with at least one isolated fixed point and a (not necessarily invariant) metric of non-negative sectional curvature.

  Then \(M\) admits a metric of positive Ricci curvature and the order of the fundamental group of \(M\) is bounded above by a constant which only depends on the dimension of \(M\).
\end{cor}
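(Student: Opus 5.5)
We need to prove two things: \(M\) admits positive Ricci curvature, and \(|\pi_1(M)|\) is bounded by a constant depending only on \(\dim M\).

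For the bound on the fundamental group, a non-negatively curved closed manifold is in particular almost non-negatively curved: rescale so that \(\diam\leq 1\), and then \(K\geq 0\geq -1/i\) holds for the constant sequence of metrics. Let \(T=S^1\) act on \(N=M\). An isolated fixed point \(x\) is a single orbit, and since it is fixed by the whole circle it is a minimal closed stratum. So we are in the situation of Theorem~\ref{sec:almost-non-negative}, and Corollary~\ref{sec:almost-non-negative-3} gives \(|\pi_1(M)|<C(m)\). In particular \(\pi_1(M)\) is finite.

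For positive Ricci curvature, the plan is to use the result of Böhm and Wilking: a closed manifold with a metric of non-negative sectional curvature and finite fundamental group admits a metric of positive Ricci curvature. Their proof runs the Ricci flow for a short time, which immediately produces \(\mathrm{Ric}>0\) when \(\pi_1\) is finite. Since finiteness was established in the previous step, this part follows at once.

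An alternative route to finiteness avoids the almost non-negative machinery. By Cheeger--Gromoll, the universal cover \(\tilde M\) splits isometrically as \(\R^k\times \bar M\) with \(\bar M\) compact, so \(\tilde M\) is homotopy equivalent to a compact manifold. Lemma~\ref{sec:main-technical-lemma} applied with \(F=\{x\}\) then shows that the image of \(\pi_1(\{x\})=1\) has finite index. Hence \(|\pi_1(M)|\leq \dim H^*(\tilde M;\mathbb{Q})=\dim H^*(\bar M;\mathbb{Q})\), and Gromov's Betti number estimate bounds this by a constant depending only on \(m\). I would present this direct argument, since it is self-contained.

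The only step needing care is matching the exact hypotheses of the Böhm--Wilking theorem (compact, \(K\geq0\), \(\pi_1\) finite). The rest is direct assembly of results already stated.
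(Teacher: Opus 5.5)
Your proposal is correct and follows the paper's proof: the paper also obtains the bound on \(|\pi_1(M)|\) from Corollary~\ref{sec:almost-non-negative-3}, with the isolated fixed point as the minimal stratum, and then applies the B\"ohm--Wilking theorem for compact non-negatively curved manifolds with finite fundamental group. Your ``direct'' route is that corollary's proof unpacked, with the Cheeger--Gromoll splitting replacing the Kapovitch--Petrunin--Tuschmann fibration theorem; this substitution is valid here since \(K\geq 0\), and it is the same argument the paper uses in Section~\ref{sec:non-negative-ricci}.
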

\begin{proof}
  By Corollary~\ref{sec:almost-non-negative-3} the claim about the fundamental group of \(M\) is clear.
  Therefore by a result of B\"ohm and Wilking \cite{boehm07:_nonneg_ricci}, the metric of non-negative sectional curvature can be smoothed using the Ricci flow to a metric of positive Ricci curvature.
\end{proof}

If \(M\) is a torus manifold with an invariant metric of non-negative sectional curvature, then the above corollary also follows from the classification result given in \cite{wiemeler}. But the proof presented here is much simpler and less technical.

The first part of Theorem~\ref{sec:almost-non-negative} also holds, when the torus action on the minimal closed stratum is only of cohomogeneity one and not homogeneous.
This is the content of the next theorem, which is a version of Theorem~\ref{sec:introduction-2} in the case of \(\dim S/T=1\). 

\begin{theorem}
  \label{sec:almost-non-negative-4}
   For all \(m\in \mathbb{N}\), there is a constant \(0<C(m)<\infty\), such that the fundamental group of every almost non-negatively curved closed manifold \(M\) of dimension \(m\), which is homotopy equivalent to a closed manifold \(N\) such that \(N\) admits an action of a torus \(T\) which has a minimal closed stratum \(S\) with \(\dim S/T=1\), contains an abelian subgroup of index \(<C(m)\).
\end{theorem}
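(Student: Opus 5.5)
We follow the proof of Theorem~\ref{sec:almost-non-negative}. By Theorem~\ref{sec:almost-non-negative-1}, the universal covering of \(M\) is diffeomorphic to \(F\times\mathbb{R}^k\). Here the total Betti number of \(F\) is bounded by a constant depending only on \(m\). Moreover \(\tilde N\simeq\tilde M\), so \(\tilde N\) has the homotopy type of a compact manifold and \(\dim H^*(\tilde N;\mathbb{Q})\leq C_1(m)\).

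As before, \(S\) is a component of \(N^H\) for a maximal non-trivial subtorus \(H\subset T\) (unless \(S=N\), which we treat separately). We choose \(S^1\subset H\) with \(N^{S^1}=N^H\). Lemma~\ref{sec:main-technical-lemma} then shows that the image of \(\pi_1(S)\to\pi_1(N)\cong\pi_1(M)\) has index at most \(C_1(m)\). It therefore suffices to show that \(\pi_1(S)\), or at least its image, contains an abelian subgroup of index bounded in terms of \(m\). An abelian subgroup of bounded index in the image gives one of bounded index in \(\pi_1(M)\).

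Next we analyse \(S\). By minimality, all points of \(S\) have isotropy groups with identity component \(H\). The torus \(T/H\) acts on \(S\) with only finite isotropy groups, and \(S/T\) is a compact \(1\)-dimensional orbifold, namely a circle or an interval. In the circle case, \(S\) is the mapping torus of a \(T\)-equivariant diffeomorphism \(\phi\) of an orbit \(T/T_x\), which is a torus. An equivariant self-map of a torus orbit commutes with the transitive \(T\)-action, so it is a translation and hence isotopic to the identity. Then \(S\cong T/T_x\times S^1\) up to this equivariant structure, and \(\pi_1(S)\) is abelian. In the interval case, \(S\) is the union of two mapping cylinders of the projections \(T/T_x\to T/K_\pm\), glued along the principal orbit \(T/T_x\), where \(K_\pm\supset T_x\) are the finite-over-\(H\) isotropy groups of the endpoints. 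By van Kampen, \(\pi_1(S)\) is a quotient of the amalgamated product of \(\pi_1(T/K_-)\) and \(\pi_1(T/K_+)\) over \(\pi_1(T/T_x)\). Each map \(\pi_1(T/T_x)\to\pi_1(T/K_\pm)\) is surjective, because it is the projection of a torus onto a torus quotient by a connected-component-extension. So \(\pi_1(S)\) is a quotient of \(\pi_1(T/T_x)\), which is abelian. In both cases \(\pi_1(S)\) is abelian, and we may take the abelian subgroup to be the image of \(\pi_1(S)\) itself.

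The main obstacle is the case \(S=N\), where \(T\) acts on all of \(N\) with finite isotropy and cohomogeneity one. In this case there are no fixed points and Lemma~\ref{sec:main-technical-lemma} is unavailable. The van Kampen argument above still applies directly to \(N\) itself and gives that \(\pi_1(N)\) is abelian, so this case holds with index one. Care is also needed to check that the surjectivity of \(\pi_1(T/T_x)\to\pi_1(T/K_\pm)\) holds even when \(K_\pm/T_x\) is disconnected. To handle this, I would first try to argue that \(T/T_x\to T/K_\pm\) is a fibration of tori over tori with connected fiber modulo finite groups, so that the cokernel on \(\pi_1\) is the finite group \(\pi_0(K_\pm/T_x)\)-quotient. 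Then \(\pi_1(S)\) is abelian-by-bounded. The bound on that finite piece comes from \(K_\pm/T_x\), which is a finite cyclic-like quotient of a torus acting freely on a sphere slice of dimension less than \(m\). Hence its contribution can be absorbed into the constant \(C(m)\).
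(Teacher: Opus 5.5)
Your overall reduction matches the paper's: Theorem~\ref{sec:almost-non-negative-1} and Gromov's estimate bound $\dim H^*(\tilde N;\mathbb{Q})$, and Lemma~\ref{sec:main-technical-lemma} is applied to a circle $S^1\subset H$ with $N^{S^1}=N^H$. Your circle case is also correct. The gap is in the interval case.

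At an endpoint orbit, $K_\pm$ acts on the $1$-dimensional slice through a homomorphism $K_\pm\to O(1)$ with kernel $T_x$. That orbit is non-principal exactly when this map is onto, so $K_\pm/T_x\cong\mathbb{Z}_2$ and $T/T_x\to T/K_\pm$ is a double covering. Hence $\pi_1(T/T_x)\to\pi_1(T/K_\pm)$ is injective with image of index $2$, not surjective. Van Kampen then gives $\pi_1(S)=A*_C B$ with $[A:C]=[B:C]=2$, which is not a quotient of $C=\pi_1(T/T_x)$.

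Your conclusion that $\pi_1(S)$ (and, when $S=N$, $\pi_1(N)$) is abelian is false. The flat Klein bottle with its standard circle action is a valid instance of $N=S$, and its fundamental group $\langle a,b\mid a^2=b^2\rangle$ is not abelian. Your fallback paragraph does not repair this. The finite cokernels do not make $\pi_1(S)$ a bounded extension of $C$, since $\pi_1(S)/C\cong\mathbb{Z}_2*\mathbb{Z}_2$ is the infinite dihedral group. Also, the ``sphere slice'' there is just $S^0$.

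Either of two ideas closes the gap.

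\emph{The paper's route.} First pass to the orientation double cover of $N$, to which a finite cover of $T$ lifts; this costs a factor $2$ in the index. Then $S$, being a fixed point component, is orientable, and $T$ acts on it preserving orientation. An element of $K_\pm\setminus T_x$ would fix its orbit pointwise (because $T$ is abelian) and act by $-1$ on the slice, so it would reverse orientation. Hence no reflection endpoints occur, $S/T$ is a circle, and $S$ is a torus.

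\emph{Within your van Kampen set-up.} Since $A$ and $B$ are abelian, $C$ is central in $A*_C B$. The preimage of the infinite cyclic index-$2$ subgroup of $\mathbb{Z}_2*\mathbb{Z}_2$ is then a central extension of $\mathbb{Z}$ by $C$, hence abelian.

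Either way $\pi_1(S)$ has an abelian subgroup of index at most $2$. Combined with Lemma~\ref{sec:main-technical-lemma}, this gives the bound $2C_1(m)$.
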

\begin{proof}
  Since the \(T\)-action on \(N\) lifts to an action of a finite covering group of \(T\) on the orientation covering \(\tilde{N}\) of \(N\) and since \(\pi_1(\tilde{N})\) has index two in \(\pi_1(N)\) it suffices to show the theorem for the case that \(N\) is orientable.

  Then since \(S\) is a fixed point component of a torus action on \(N\), \(S\) is an orientable submanifold of \(N\).
  Moreover the \(T\)-action on \(S\) is orientation preserving, since \(T\) is connected.
  Since by minimality of \(S\) all \(T\)-orbits in \(S\) have the same dimension \(\dim S-1\), it follows from the slice theorem that all orbits of the \(T\)-action on \(S\) are principal.
  Therefore \(S\) is a principal torus bundle over a circle.
  So it is homeomorphic to a torus and \(\pi_1(S)\) is abelian.

  To conclude the proof of the theorem, we just remark that one can argue as in the proof of Theorem~\ref{sec:almost-non-negative} to show that there is a constant \(C(m)\) depending only on the dimension of \(M\) such that the index of \(\pi_1(S)\) in \(\pi_1(N)\) is bounded above by \(C(m)\).
\end{proof}

Now we get Theorem~\ref{sec:introduction-2} from the introduction by combining Theorems~\ref{sec:almost-non-negative} and \ref{sec:almost-non-negative-4}.

We next prove Theorem~\ref{sec:introduction-6}.
Before we do so we state the theorem again.

\begin{theorem}
  Let \(m\in \mathbb{N}\). Then there is a constant \(0<C(m)<\infty\) such that if \(M\) is a non-negatively curved closed manifold of dimension \(m\) with an isometric effective action of a torus \(T\) such that there is a minimal closed stratum \(S\subset M\) with
  \begin{enumerate}
  \item\label{item:3} all \(x\in S\) have the same isotropy group \(H\subset T\),
  \end{enumerate}
  then the following holds:
  \begin{enumerate}
  \item[(a)] If \(\dim S/T\leq 3\) then \(\pi_1(M)\) contains an abelian subgroup of index at most \(C(m)\).
  \item[(b)] If \(\dim S/T=4\) then \(\pi_1(M)\) contains a two-step nilpotent subgroup of index at most \(C(m)\).
  \end{enumerate}
\end{theorem}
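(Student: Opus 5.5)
The strategy follows the proof of Theorem~\ref{sec:almost-non-negative}. The index of \(\operatorname{im}(\pi_1(S)\to\pi_1(M))\) in \(\pi_1(M)\) is bounded by a constant depending only on \(m\). It then remains to find a subgroup of \(\pi_1(S)\) of index bounded in terms of \(m\) which is abelian, respectively two-step nilpotent.

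\textbf{Step 1: bounding the index of \(\pi_1(S)\).} Since \(M\) is non-negatively curved, the splitting theorem \cite{MR0303460} gives \(\tilde M\cong F\times\mathbb{R}^k\) with \(F\) a closed simply connected non-negatively curved manifold. By Gromov's Betti number estimate \cite{gromov_curvature_1981}, \(\dim H^*(\tilde M;\mathbb{Q})\le C(m)\). Let \(H^0\) be the identity component of \(H\). The stratum \(S\) is a component of \(M^{H^0}\) (or \(S=M\) if \(H^0\) is trivial). Since there are finitely many orbit types, we may pick \(S^1\subset H^0\) with \(S\) a component of \(M^{S^1}\). Lemma~\ref{sec:main-technical-lemma} then shows that the image of \(\pi_1(S)\) has index \(\le C(m)\).

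If \(H^0\) is trivial, then \(S=M\) and the action is free up to the finite group \(H\), so this step is vacuous. That case is covered by Step 2 applied to \(M\) itself.

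\textbf{Step 2: the structure of \(\pi_1(S)\).} All points of \(S\) have isotropy \(H\), so \(T/H\) acts freely on \(S\). Hence \(S\to S/T=B\) is a principal torus bundle over a manifold \(B\) of dimension \(\le 4\). Moreover \(S\) is a totally geodesic submanifold (a fixed point component of an isometric action), so it carries a non-negatively curved metric invariant under \(T/H\). The quotient metric on \(B\) is then non-negatively curved, in the smooth or at least the Alexandrov sense, and \(B\) is a closed manifold.

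The long exact sequence of the bundle gives
\[
\pi_2(B)\to\pi_1(T/H)\to\pi_1(S)\to\pi_1(B)\to 1 .
\]
This is a central extension, because the fiber is an orbit of a connected abelian group (compare the proof of Lemma~\ref{sec:main-technical-lemma-2}).

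Next we bound \(\pi_1(B)\). By Cheeger--Gromoll, \(\pi_1(B)\) is virtually \(\mathbb{Z}^b\) with \(b=\operatorname{rank}\le\dim B\le4\). Bounding the index of this abelian subgroup by a constant depending only on \(\dim B\) is the main input from Bru\`e--Naber--Semola \cite{MR5013747}, which treats dimensions \(\le4\). So we pass to a subgroup \(\Gamma\subset\pi_1(S)\) of bounded index that is a central extension of \(\mathbb{Z}^b\) by a quotient \(A\) of \(\mathbb{Z}^{r}\). Such a \(\Gamma\) is two-step nilpotent, which gives (b).

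For (a) we need more. A finite cover of \(B\) splits isometrically as \(\mathbb{R}^b\times B'\), where \(B'\) is simply connected and compact. The commutator pairing \(\mathbb{Z}^b\times\mathbb{Z}^b\to A\) is governed by the Euler class of the bundle restricted to the torus directions, a class in \(H^2(T^b;\pi_1(T/H))\). When \(\dim B\le 3\), we have \(b\le 3\). If \(b=3\), then \(B\) is flat up to a finite cover. Then \(S\) is a flat-by-torus bundle with a non-negatively curved metric; by the splitting theorem \(S\) is virtually flat, so \(\pi_1(S)\) is virtually abelian with index bounded by Bieberbach. If \(b\le2\), the cases \(b=0,1\) give an abelian (cyclic-by-central) extension directly. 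For \(b=2\), \(B'\) is a point or \(S^1\)-free, and one must show that the Euler-class pairing vanishes on a finite-index subgroup. I would argue via non-negative curvature of \(S\): a nonzero commutator would make \(\tilde S\) contain a Heisenberg-type factor, contradicting the splitting \(\tilde S=\mathbb{R}^{k'}\times\) compact together with Gromov's bounded generation.

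\textbf{Main obstacle.} The hardest step is obtaining a uniform bound, in terms of \(m\), on the index of the abelian subgroup in \(\pi_1(S)\) for case (a). The extension argument gives abelianness only up to finite index, and controlling that index needs the Bru\`e--Naber--Semola techniques applied to the non-negatively curved \(S\), using that \(S\) is itself non-negatively curved with a free \(T/H\)-action of cohomogeneity \(\le3\).
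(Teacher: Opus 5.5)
Your Step~1 and part~(b) are correct and match the paper: Lemma~\ref{sec:main-technical-lemma} with Gromov's estimate, the central extension \(\pi_1(T/H)\to\pi_1(S)\to\pi_1(S/T)\to 1\), and Bru\`e--Naber--Semola applied to the base. The gap is in~(a). As you concede yourself, no uniform bound is actually produced there.

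Concretely, your claim that for \(b\le 1\) the extension is abelian ``directly'' is false. A central extension of \(\pi_1(B)\) by an abelian group is automatically abelian only when \(\pi_1(B)\) is cyclic, and \(\pi_1(B)\) need not be. For \(B=S^3/Q_8\) (\(b=0\)) or \(B=\mathbb{R}P^3\#\mathbb{R}P^3\) (\(b=1\), \(\pi_1(B)=\mathbb{Z}_2*\mathbb{Z}_2\)), already the trivial bundle \(S=B\times T^r\) has non-abelian \(\pi_1(S)\). A bounded-index \emph{abelian} subgroup of \(\pi_1(B)\), which is all Bru\`e--Naber--Semola gives you, is not enough in general. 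The finite Heisenberg groups \(H_3(\mathbb{Z}_k)\), central extensions of \(\mathbb{Z}_k^2\) by \(\mathbb{Z}_k\), show this.

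What you need is a \emph{cyclic} subgroup of \(\pi_1(B)\) of uniformly bounded index; its preimage in \(\pi_1(S)\) is then abelian. For \(\dim B\le 3\) and \(b\le 1\) this holds, but it needs an argument:
\begin{enumerate}
\item If \(\tilde B\) is compact, it is a homotopy \(3\)-sphere. A finite abelian group acting freely on it is cyclic by Smith theory, so the abelian subgroup from Bru\`e--Naber--Semola is already cyclic.
\item If \(\tilde B=S^2\times\R\), isometries preserve the splitting. The subgroup acting trivially on the line acts freely on \(S^2\), so it has order at most two. This yields an infinite cyclic subgroup of index at most four.
\end{enumerate}
For \(b=2\), your ``Heisenberg factor plus bounded generation'' sketch is not an argument and gives no bound.

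The missing observation for \(b\ge 2\) is short. The \emph{universal} (not a finite) cover splits as \(\tilde B=\R^b\times B'\), with \(B'\) closed, simply connected, and of dimension \(\dim B-b\le 1\). So \(B'\) is a point and \(B\) is flat. Then \(S\) is aspherical, the splitting theorem forces \(S\) to be flat, and Bieberbach gives an abelian subgroup of index bounded in terms of \(m\). In other words, your \(b=3\) argument already covers \(b=2\).

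With these additions, your case split by \(b\) would be a valid alternative to the paper's route. The paper instead proceeds by the dimension of \(S/T\):
\begin{enumerate}
\item For \(\dim S/T\le 1\) it uses Theorems~\ref{sec:almost-non-negative} and~\ref{sec:almost-non-negative-4}.
\item For \(\dim S/T=2\) it uses the classification of surfaces, the orientation cover, and Lemma~\ref{sec:almost-non-negative-5}: a principal torus bundle over \(T^2\) with a non-negatively curved metric is a torus.
\item For \(\dim S/T=3\) it uses Lemma~\ref{sec:almost-non-negative-6} to get \(\tilde S\cong N\times\R^k\) with \(N\in\{\mathrm{pt},S^2,S^3\}\), and then runs the Bru\`e--Naber--Semola argument on \(\tilde S\).
\end{enumerate}
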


\begin{proof}
  Since \(M\) is closed there are only finitely many \(T\)-orbit types in \(M\).
  Therefore there is an \(S^1\subset T\) such that \(S\) is a fixed point component of \(S^1\) or \(S=M\).
  Hence by Lemma~\ref{sec:main-technical-lemma} and Gromov's Betti number estimate, the claim would follow if we can show that \(\pi_1(S)\) is uniformly virtually abelian or two-step nilpotent, respectively.

  Since \(S\) is a minimal stratum, by Assumption~(\ref{item:3}), \(S\) is a principal \(T/H\)-bundle over the closed manifold \(S/T\), where \(H\) is the common isotropy group of the points in \(S\).
  Therefore there is a central extension
  \[\pi_1(T/H)\rightarrow \pi_1(S)\rightarrow \pi_1(S/T)\rightarrow 1.\]

  Hence, to see that \(\pi_1(S)\) contains a two-step nilpotent subgroup of controlled index it suffices to show that there is an abelian subgroup of \(\pi_1(S/T)\) whose index is bounded above by a constant which is independent of \(M\).

  We assumed that the \(T\)-action on \(M\) is isometric.
  Therefore \(S\) is totally geodesic in \(M\). So it is non-negatively curved. Hence \(S/T\) equipped with the quotient metric is also non-negatively curved.
  So our claim for \(\dim S/T=4\) follows from the main result of \cite{MR5013747}.

  If \(\dim S/T\leq 1\), then it follows from Theorems~\ref{sec:almost-non-negative} and~\ref{sec:almost-non-negative-4} that \(\pi_1(M)\) contains an abelian subgroup of index less than \(C(m)\).

  Next we consider the case where \(\dim S/T=2\).
  Then by the classification of surfaces and the Gauß--Bonnet theorem \(S/T\) is diffeomorphic to the sphere \(S^2\), the torus \(T^2\), the real projective plane \(\mathbb{R} P^2\) or the Klein bottle \(K\).
  In the last two cases \(S\) is non-orientable and the orientation double covering \(\tilde{S}\)  of \(S\) is diffeomorphic to a principal \(T/H\)-bundle over \(S^2\) or \(T^2\).
  Since \(\pi_1(\tilde{S})\) has index two in \(\pi_1(S)\), it suffices to show that if \(R\) is a principal torus bundle over \(S^2\) or \(T^2\) equipped with a torus invariant non-negatively curved Riemannian metric, then \(\pi_1(R)\) is abelian.

  Since \(S^2\) is simply connected the first case is clear.
  For the second case we note that there is a tower
  \[R=E_k\rightarrow E_{k-1}\rightarrow\dots\rightarrow E_1\rightarrow E_0=T^2,\]
  such that each \(E_l\rightarrow E_{l-1}\), \(1\leq l\leq k\), is a principal \(S^1\)-bundle and \(E_l\) is equipped with an \(S^1\)-invariant metric of non-negative sectional curvature.

  Therefore in this case the claim for \(\dim S/T=2\) follows by induction on \(k\) from Lemma~\ref{sec:almost-non-negative-5} below.

  Finally we consider the case \(\dim S/T=3\).
  In this case it follows from the splitting theorem and Lemma~\ref{sec:almost-non-negative-6} below that the universal covering of \(S\) is isometric to a Riemannian product
  \(N\times \mathbb{R}^k\), where \(N\) is a point, \(S^2\) or \(S^3\) equipped with some Riemannian metric of non-negative sectional curvature and \(\mathbb{R}^k\) is the flat Euclidean space of dimension \(k=\dim S-\dim N\leq \dim S\leq \dim M=m\).

  Therefore it follows as in the proof of the main result of \cite{MR5013747}, that the fundamental group of \(S\) contains an abelian subgroup whose index is bounded above by a constant which only depends on \(m\).
  So our claim is proved in this case.

  Therefore the theorem is proved.
\end{proof}

\begin{lemma}
  \label{sec:almost-non-negative-5}
  Let \(E\rightarrow B\) be a principal \(S^1\)-bundle, such that the total space \(E\) is equipped with a metric of non-negative sectional curvature.
  If \(B\) is diffeomorphic to an \(n\)-dimensional torus \(T^n\), \(n\geq 1\), then \(E\) is diffeomorphic to an \((n+1)\)-dimensional torus \(T^{n+1}\).
\end{lemma}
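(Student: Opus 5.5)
We want to show: a principal \(S^1\)-bundle \(E\to T^n\) whose total space carries a metric of non-negative sectional curvature is diffeomorphic to \(T^{n+1}\). Equivalently, the Euler class \(e\in H^2(T^n;\mathbb{Z})\) vanishes, since principal \(S^1\)-bundles are classified by \(e\) and the trivial one is \(T^n\times S^1=T^{n+1}\). The metric on \(E\) need not be invariant, so we only use the topology forced by non-negative curvature.

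First compute \(\pi_1(E)\). The long exact homotopy sequence gives a central extension
\[1\rightarrow \mathbb{Z}\rightarrow \pi_1(E)\rightarrow \mathbb{Z}^n\rightarrow 1 .\]
The fibre map \(\pi_2(T^n)=0\rightarrow\pi_1(S^1)\) is injective, so the kernel is \(\mathbb{Z}\). Moreover \(\pi_1(E)\) is a nilpotent (Heisenberg-type) group, whose commutator is given by the Euler class viewed as an alternating form \(\mathbb{Z}^n\times\mathbb{Z}^n\to\mathbb{Z}\). Concretely, if \(a_i\) are lifts of the standard generators and \(z\) generates the fibre, then
\[[a_i,a_j]=z^{e(\epsilon_i\wedge\epsilon_j)} .\]
In particular \(\pi_1(E)\) is torsion free. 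All higher homotopy groups vanish, so \(E\) is aspherical: its universal cover is contractible, being an \(\mathbb{R}\)-bundle over \(\mathbb{R}^n\).

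Next use non-negative curvature via the Cheeger--Gromoll splitting theorem \cite{MR0303460}. The universal cover \(\tilde E\) is isometric to \(N\times\mathbb{R}^k\) with \(N\) compact. Since \(\tilde E\) is contractible, \(N\) is a closed contractible manifold, hence a point. So \(\tilde E\) is flat \(\mathbb{R}^{n+1}\), and \(E\) is a compact flat manifold.

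By Bieberbach, \(\pi_1(E)\) then contains a free abelian subgroup of finite index. A torsion-free nilpotent group with this property is abelian: by Mal'cev uniqueness of rational closures, a finite-index subgroup and the whole group have the same Mal'cev completion, which is abelian here. Hence all commutators \([a_i,a_j]\) are trivial. Since \(z\) has infinite order, \(e(\epsilon_i\wedge\epsilon_j)=0\) for all \(i,j\), so \(e=0\). The bundle is therefore trivial and \(E\cong T^n\times S^1=T^{n+1}\).

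The only delicate step is this group-theoretic one: passing from "virtually abelian" to "abelian" for the torsion-free nilpotent group \(\pi_1(E)\). It can also be done by hand. If \(a_i^N\) and \(a_j^N\) commute, then the central extension formula gives \([a_i^N,a_j^N]=z^{N^2 e_{ij}}\), so \(N^2 e_{ij}=0\) and hence \(e_{ij}=0\).
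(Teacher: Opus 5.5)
Your proof is correct. Its analytic core is the same as the paper's: contractibility of \(\tilde E\) together with the Cheeger--Gromoll splitting makes \(E\) flat, and Bieberbach then makes \(\pi_1(E)\) virtually free abelian. You differ from the paper in how you get \(e=0\) out of flatness.

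The paper argues cohomologically. The Gysin sequence gives \(b_1(E)\in\{n,n+1\}\), with \(b_1(E)=n+1\) exactly when \(e=0\). In the case \(b_1(E)=n\), it writes \(E=\tilde E/\Gamma\) with \(\tilde E\) a flat torus and \(\Gamma\) finite. Then \(\Gamma\) fixes only a hyperplane of \(H^1(\tilde E;\mathbb{R})\), so some element acts by \(-1\) on \(H^{n+1}(\tilde E;\mathbb{R})\). Hence \(E\) would be non-orientable, which contradicts the orientability of a principal \(S^1\)-bundle over \(T^n\).

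You argue group-theoretically instead. \(\pi_1(E)\) is the central extension of \(\mathbb{Z}^n\) by \(\mathbb{Z}\) whose commutator form is \(e\). Your computation \([a_i^N,a_j^N]=z^{N^2e_{ij}}\) then kills \(e\), with \(N\) the index of the normal translation lattice, so that every \(g^N\) lies in it.

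What each route buys:
\begin{itemize}
\item The paper's route needs only the Gysin sequence and the transfer isomorphism \(H^*(E;\mathbb{R})\cong H^*(\tilde E;\mathbb{R})^\Gamma\). It never has to identify the extension class of \(\pi_1(E)\) with the Euler class.
\item Your route needs that identification, which is standard (for instance by restricting to the coordinate \(2\)-tori). In return it avoids the case distinction and the orientation argument, and shows directly that \(\pi_1(E)\) is abelian.
\end{itemize}

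Your route also generalizes more cleanly. A principal \(T^k\)-bundle over \(T^n\) is aspherical, and its fundamental group is a torsion-free central extension of \(\mathbb{Z}^n\) by \(\mathbb{Z}^k\). So the same reasoning shows in one step that this group is abelian whenever the total space carries a metric of non-negative (even just Ricci) curvature. That would let one bypass, in the proof of Theorem~\ref{sec:introduction-6}, both the induction over the tower \(E_k\to\dots\to E_0\) and the need for non-negatively curved metrics on the intermediate stages.
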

\begin{proof}
  It follows from the Gysin sequence of the bundle \(E\rightarrow B\) that the first Betti number of \(E\) is \(n\) or \(n+1\).
  In the second case, the Euler class of the bundle vanishes.
  Hence, the bundle is trivial and the claim follows.

  So assume that the first Betti number of \(E\) is \(n\).
  Since the universal covering of \(E\) is diffeomorphic to \(\mathbb{R}^{n+1}\), it follows from the splitting theorem \cite{MR0303460} that \(E\) is a flat manifold.
  Therefore, by the Bieberbach theorems, \(\pi_1(E)\) contains a finite index normal free abelian subgroup \(G\) such that
  the quotient \(\tilde{E}=\mathbb{R}^{n+1}/G\) is isometric to a flat \(n+1\)-dimensional torus.
  Let \(\Gamma\) be the finite group \(\pi_1(E)/G\).
  Then \(\Gamma\) acts on \(\tilde{E}\) such that \(\tilde{E}/\Gamma=E\).
  Therefore we have
  \[H^*(E;\mathbb{R})\cong H^*(\tilde{E};\mathbb{R})^{\Gamma}.\]

  Since \[\dim H^1(E;\mathbb{R})=n=\dim H^1(\tilde{E};\mathbb{R})-1,\] it follows that \(\Gamma\) acts with a codimension-one fixed point set on \(H^1(\tilde{E};\mathbb{R})\).
  But this means that \(\Gamma\) acts non-trivially on the one-dimensional space \(H^{n+1}(\tilde{E};\mathbb{R})\).
  Therefore it follows that \(H^{n+1}(E;\mathbb{R})=0\) and \(E\) is non-orientable.

  This is a contradiction because \(B\) is orientable and \(E\) as a principal \(S^1\)-bundle over \(B\) is also orientable.
  Therefore we must have \(b_1(E)=n+1\) and the lemma is proved.
\end{proof}

\begin{lemma}
  \label{sec:almost-non-negative-6}

  Let \(S\rightarrow X\) be a principal torus bundle over a closed non-negatively curved three-manifold \(X\).
  Then the universal covering of \(S\) is diffeomorphic to one of the following manifolds:
  \begin{align*}
    S^3&\times \mathbb{R}^k& S^2&\times \mathbb{R}^{k+1}& \mathbb{R}^{k+3}&,
  \end{align*}
  where \(k\) is the dimension of the fiber of \(S\rightarrow X\).
\end{lemma}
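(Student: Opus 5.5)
First I would determine the possible diffeomorphism types of the universal cover \(\tilde{X}\) of the base. The manifold \(X\) is a closed three-manifold with a metric of non-negative sectional curvature, so the splitting theorem \cite{MR0303460} shows that \(\tilde{X}\) is isometric to \(N\times\mathbb{R}^j\), with \(N\) compact and simply connected. Since \(\dim X=3\), the factor \(N\) is either a point (\(j=3\)) or a simply connected closed surface (\(j=1\)), in which case \(N=S^2\). It can also be a simply connected closed three-manifold with \(j=0\). In that last case Hamilton's classification of closed three-manifolds with non-negative Ricci curvature, or alternatively the Poincaré conjecture, gives \(N\cong S^3\). The case \(N=\) point, \(j=0\) cannot occur because \(\dim N+j=3\). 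So \(\tilde{X}\) is diffeomorphic to \(S^3\), \(S^2\times\mathbb{R}\) or \(\mathbb{R}^3\).

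Next I would pull the principal \(T^k\)-bundle \(S\to X\) back along \(\tilde{X}\to X\). This gives a principal \(T^k\)-bundle \(P\to\tilde{X}\), and \(P\) covers \(S\), so \(\tilde{S}=\tilde{P}\). Its classifying data are \(k\) Euler classes in \(H^2(\tilde{X};\mathbb{Z})\).

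\begin{itemize}
\item For \(\tilde{X}=S^3\) or \(\mathbb{R}^3\), the group \(H^2(\tilde{X};\mathbb{Z})\) vanishes. Hence \(P\) is trivial, \(P=\tilde{X}\times T^k\), and \(\tilde{S}=\tilde{X}\times\mathbb{R}^k\). This gives \(S^3\times\mathbb{R}^k\) or \(\mathbb{R}^{k+3}\).
\item For \(\tilde{X}=S^2\times\mathbb{R}\), the bundle is pulled back from a principal \(T^k\)-bundle over \(S^2\), given by Euler classes \((e_1,\dots,e_k)\in\mathbb{Z}^k\). A change of basis of \(T^k\) by an element of \(GL_k(\mathbb{Z})\) brings this vector to \((d,0,\dots,0)\) with \(d=\gcd(e_i)\geq 0\). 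The bundle is then \(L(d)\times T^{k-1}\times\mathbb{R}\), where \(L(d)\) is the circle bundle over \(S^2\) with Euler number \(d\).
\end{itemize}

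In the last case the universal cover is \(S^3\times\mathbb{R}^k\) if \(d\neq 0\), since then \(L(d)\) is a lens space covered by \(S^3\). It is \(S^2\times\mathbb{R}^{k+1}\) if \(d=0\), since then \(L(0)=S^2\times S^1\).

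In total the three cases produce exactly the three manifolds of the statement: \(S^3\times\mathbb{R}^k\), \(S^2\times\mathbb{R}^{k+1}\) and \(\mathbb{R}^{k+3}\).

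I expect the main obstacle to be identifying \(N\) with \(S^3\) when \(\tilde{X}\) is compact. This needs either the Poincaré conjecture or Hamilton's theorem. I would cite Hamilton's result, which for non-negative Ricci curvature in dimension three gives a metric that is a spherical space form, a quotient of \(S^2\times\mathbb{R}\), or flat. That result would in fact cover the whole first step at once. The bundle step is elementary, because every bundle is pulled back from a simply connected base.
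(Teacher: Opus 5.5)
Your proof is correct, but you organize the bundle step differently from the paper.

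\textbf{The paper's route.} It argues by induction on \(k\). It splits \(S\to X\) into a tower of principal \(S^1\)-bundles \(E_k\to\dots\to E_1\to E_0=X\). It then proves a stronger statement: a principal torus bundle over any manifold whose universal cover is \(S^3\times\mathbb{R}^l\), \(S^2\times\mathbb{R}^{1+l}\) or \(\mathbb{R}^{3+l}\) again has universal cover of this form. Each inductive step uses only three facts:
\begin{itemize}
\item circle bundles are classified by \(H^2\);
\item a nontrivial circle bundle over \(S^2\times\mathbb{R}^l\) has universal cover \(S^3\times\mathbb{R}^l\);
\item pulling back along a covering of the base gives a covering of the total space.
\end{itemize}

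\textbf{Your route.} You pull back the whole \(T^k\)-bundle to \(\tilde X\) once. You then normalize the Euler class vector in \(H^2(S^2;\mathbb{Z})^k\cong\mathbb{Z}^k\) by an automorphism of \(T^k\). This changes the principal action but not the total space, which is all that matters here. You arrive at the explicit model \(L(d)\times T^{k-1}\times\mathbb{R}\).

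\textbf{Comparison.} Your version needs no auxiliary inductive hypothesis and shows directly how the three cases arise. The paper's version needs only circle-bundle facts, and it mirrors the tower argument the paper already uses in the \(\dim S/T=2\) case.

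\textbf{The base.} You are more careful than the paper at the first step. The paper attributes \(\tilde X\in\{S^3,\,S^2\times\mathbb{R},\,\mathbb{R}^3\}\) to the splitting theorem alone. As you point out, the compact simply connected case genuinely requires Hamilton's theorem or the Poincar\'e conjecture. You should also say explicitly that a one-dimensional compact factor cannot occur, since there is no closed simply connected \(1\)-manifold.
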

\begin{proof}
  The proof is by induction on \(k\) using the following facts:
  \begin{enumerate}
  \item\label{item:7}
    If \(E_k\rightarrow B\) is a principal \(T^k\)-bundle, then there
    is a tower \[E_k\rightarrow E_{k-1}\rightarrow\dots\rightarrow E_1\rightarrow E_0=B,\]
  such that each \(E_l\rightarrow E_{l-1}\), \(1\leq l\leq k\), is a principal \(S^1\)-bundle.
\item\label{item:6} Principal \(S^1\)-bundles over some base manifold \(B\) are classified by \(H^2(B;\mathbb{Z})\). Hence all principal \(S^1\)-bundles over \(S^3\times \mathbb{R}^l\) and \(\mathbb{R}^l\) are trivial.
    \item\label{item:5} The universal covering space of the total space of a non-trivial principal \(S^1\)-bundle over \(S^2\times \mathbb{R}^l\) is diffeomorphic to \(S^3\times \mathbb{R}^l\).
\item\label{item:4} If \(E\rightarrow B\) is some bundle and \(p:\tilde{B}\rightarrow B\) a covering map, then the total space of the pullback of \(E\) along \(p\) is a covering space of \(E\).
\item\label{item:2} By the splitting theorem the universal covering of \(X\) is diffeomorphic to
  \(S^3\), \(S^2\times \mathbb{R}\) or \(\mathbb{R}^3\).
  \end{enumerate}
Indeed, we show by induction on \(k\) that if \(E_k\rightarrow B\) is a principal \(T^k\)-bundle over a manifold \(B\) with universal covering diffeomorphic to
\(S^3\times \mathbb{R}^l\), \(S^2\times \mathbb{R}^{1+l}\) or \(\mathbb{R}^{3+l}\) for some \(l\in \mathbb{N}\), then the universal covering of \(E_k\) is also of this form.

To do so, we look at the tower (\ref{item:7}) for \(E_k\rightarrow B\).
Then, by (\ref{item:6}), (\ref{item:5}) and  (\ref{item:4}) the universal covering of \(E_1\) is diffeomorphic to one of \(S^3\times \mathbb{R}^{l+1}\), \(S^2\times \mathbb{R}^{2+l}\) or \(\mathbb{R}^{4+l}\).
Moreover, \(E_k\) is a principal torus bundle over \(E_1\) with fiber dimension \(k-1\).
Hence the claim follows by induction.

So by (\ref{item:2}) the lemma follows.
\end{proof}

\section{Non-negative Ricci curvature}
\label{sec:non-negative-ricci}

In this section we discuss Riemannian manifolds with \(S^1\)-actions with isolated fixed points.
Our main result in this section is as follows.

\begin{theorem}
  Let \(M\) be an \(S^1\)-manifold with at least one isolated fixed
  point and \(N\) be a closed  connected manifold which admits a metric of non-negative
  Ricci-curvature. Assume that \(M\) and \(N\) are homotopy
  equivalent.

  Then the fundamental group of \(M\) (and \(N\)) is finite.
\end{theorem}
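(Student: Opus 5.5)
Apply Lemma~\ref{sec:main-technical-lemma} with \(F\) the isolated fixed point, after checking its hypothesis that the universal covering \(\tilde{M}\) has the homotopy type of a compact manifold. The image of \(\pi_1(F)\rightarrow\pi_1(M)\) is then trivial, since \(F\) is a point, and it has finite index in \(\pi_1(M)\). So \(\pi_1(M)\) is finite, of order at most \(\dim H^*(\tilde{M};\mathbb{Q})\), and \(\pi_1(N)\cong\pi_1(M)\) since \(M\simeq N\).

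The main step is the hypothesis on \(\tilde{M}\). I use the Cheeger--Gromoll splitting theorem \cite{MR0303460} for closed manifolds of non-negative Ricci curvature. The universal covering \(\tilde{N}\) with the lifted metric is isometric to \(\bar{N}\times\mathbb{R}^k\), where \(\bar{N}\) is compact; in fact \(\bar{N}\) is a closed simply connected manifold. Hence \(\tilde{N}\) is homotopy equivalent to the closed manifold \(\bar{N}\). A homotopy equivalence \(M\rightarrow N\) lifts to a homotopy equivalence \(\tilde{M}\rightarrow\tilde{N}\) of universal coverings, because it induces an isomorphism on fundamental groups. Therefore \(\tilde{M}\simeq\bar{N}\), which is a compact manifold, and \(\dim H^*(\tilde{M};\mathbb{Q})=\dim H^*(\bar{N};\mathbb{Q})<\infty\).

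One point needs checking against the proof of Lemma~\ref{sec:main-technical-lemma}. That proof uses localization \cite[Corollary 3.1.14]{MR1236839} for the lifted \(S^1\)-action on \(\tilde{M}\), and this requires finiteness conditions on the space: finite-dimensional with finite-dimensional total rational cohomology. \(\tilde{M}\) is a finite-dimensional manifold, and its total rational cohomology is finite because \(\tilde{M}\simeq\bar{N}\). So the localization bound on the number of fixed point components of the lifted action should go through, even when \(M\) is noncompact. The action lifts because there are fixed points; one takes the lift that fixes a preimage of the isolated fixed point.

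The deck group permutes the finitely many fixed components of \(\tilde{M}\). The preimage of the isolated point consists of isolated fixed points, which are distinct components. The deck group acts freely and transitively on this preimage. Hence the preimage has at most \(\dim H^*(\bar{N};\mathbb{Q})\) points, so \(\pi_1(M)\) is finite.

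The step I expect to be the main obstacle is the justification that localization applies to the possibly noncompact \(\tilde{M}\) using only its homotopy type. If needed, I would argue with \(\tilde{M}\) as a finite-dimensional \(S^1\)-CW complex of finite cohomological type, or with cohomology with compact supports, which is the generality in which \cite{MR1236839} states the result.
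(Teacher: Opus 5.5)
Your proposal is correct and follows the paper's proof. You apply Lemma~\ref{sec:main-technical-lemma} with \(F\) the isolated fixed point, and you verify its hypothesis via the Cheeger--Gromoll splitting \(\tilde N\cong\bar N\times\mathbb{R}^k\) together with lifting the homotopy equivalence to universal coverings; your extra remarks on why localization applies to the possibly noncompact \(\tilde M\) (finite-dimensional, hence finitistic, with finite total rational cohomology) only make explicit what the paper's proof of the lemma leaves implicit.
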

\begin{proof}
  This follows from  Lemma~\ref{sec:main-technical-lemma} because the universal covering of
  \(N\) has the homotopy type of a compact manifold by the splitting
  theorem of Cheeger and Gromoll \cite{MR0303460}, as in the proof of Corollary~\ref{sec:almost-non-negative-3}.
\end{proof}

A torus manifold is a closed oriented \(2n\)-dimensional manifold with an effective action of a \(n\)-dimensional torus such that there are fixed points.
They are generalizations of smooth toric varieties.
Since on a torus manifold there is an \(S^1\)-action with only isolated fixed points, the above theorem immediately implies the following corollary.

\begin{cor}
  Let \(M\) be a torus manifold which admits a (not necessarily invariant) metric of non-negative Ricci curvature.
  Then \(\pi_1(M)\) is finite.
\end{cor}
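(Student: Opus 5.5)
The corollary should follow directly from the preceding theorem once we produce a circle action on \(M\) with an isolated fixed point. I will not redo any curvature analysis.

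\emph{Step 1: the circle.} Let \(M\) be a torus manifold of dimension \(2n\) with effective \(T=T^n\)-action, and let \(x\in M^T\). The isotropy representation of \(T\) on \(T_xM\cong\mathbb{C}^n\) splits into \(n\) complex one-dimensional weight spaces with weights \(\alpha_1,\dots,\alpha_n\in\operatorname{Hom}(T,S^1)\). By effectiveness and the slice theorem these weights are nonzero; in fact they form a basis of the weight lattice. So \(x\) is an isolated point of \(M^T\).

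Since \(M\) is compact, \(M^T\) has only finitely many components. Collecting the isotropy weights at one point from each component gives finitely many nonzero weights. Choose a circle \(S^1\subset T\), given by a cocharacter \(\lambda\), with \(\langle\alpha,\lambda\rangle\neq 0\) for all these weights. Such a \(\lambda\) exists because we only need to avoid finitely many hyperplanes in the cocharacter lattice. Then \(S^1\) acts on \(T_xM\) without nonzero fixed vectors. By the slice theorem, \(x\) is therefore an isolated point of \(M^{S^1}\). This is the only step needing care, and it is elementary. In fact a single isolated fixed point suffices, so we only need \(\langle\alpha_i,\lambda\rangle\neq0\) for the weights at the one point \(x\).

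\emph{Step 2: apply the theorem.} The manifold \(M\) is closed and admits a metric of non-negative Ricci curvature, so we may take \(N=M\) with the identity as the homotopy equivalence. The \(S^1\)-manifold \(M\) has the isolated fixed point \(x\). The preceding theorem, via Lemma~\ref{sec:main-technical-lemma} and the Cheeger--Gromoll splitting theorem, then yields that \(\pi_1(M)\) is finite.

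For the record, the mechanism behind that theorem is as follows. The component \(F=\{x\}\) has trivial fundamental group, so the image of \(\pi_1(F)\to\pi_1(M)\) is trivial. Lemma~\ref{sec:main-technical-lemma} says this image has index at most \(\dim H^*(\tilde M;\mathbb{Q})\). This bound is finite because \(\tilde M\) is homotopy equivalent to a compact manifold by the splitting theorem. Hence \(\pi_1(M)\) is finite.
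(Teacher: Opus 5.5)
Your proposal is correct and follows the paper's route. The paper deduces this corollary directly from the preceding theorem (with \(N=M\)), using only that a torus manifold carries a circle action with isolated fixed points; your Step~1 supplies the routine verification of this, by picking a generic cocharacter that avoids the weight hyperplanes at a \(T\)-fixed point, and a single isolated fixed point is all the theorem requires.
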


For four-dimensional non-negatively Ricci-curved \(S^1\)-manifolds we get the following corollary.

\begin{cor}
  Let \(M\) be a four-dimensional closed connected manifold with \(\chi(M)>0\) which admits an effective \(S^1\)-action and a (not necessarily invariant) metric of non-negative Ricci curvature.
  Then \(\pi_1(M)\) is finite.
\end{cor}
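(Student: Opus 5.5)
The aim is to reduce to the preceding theorem, which requires an isolated fixed point of the \(S^1\)-action. Let \(M\) be four-dimensional with \(\chi(M)>0\) and an effective \(S^1\)-action. Since \(\chi(M^{S^1})=\chi(M)>0\), the fixed point set is non-empty. Each component of \(M^{S^1}\) is a closed submanifold of even codimension, so it is a point or a closed surface; a codimension zero component is excluded by effectiveness. Hence \(M^{S^1}\) is a disjoint union of isolated points and closed surfaces \(F_1,\dots,F_r\), and
\[\chi(M)=\#\{\text{isolated fixed points}\}+\sum_i \chi(F_i).\]
If there is an isolated fixed point, the preceding theorem with \(N=M\) gives that \(\pi_1(M)\) is finite.

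So assume all fixed point components are surfaces. Since \(\chi(M)>0\), some component \(F\) has \(\chi(F)>0\), so \(F\) is \(S^2\) or \(\mathbb{R}P^2\). In particular \(\pi_1(F)\) is finite. The universal covering of \(M\) has the homotopy type of a compact manifold, by the Cheeger--Gromoll splitting theorem applied to the non-negatively Ricci curved metric on \(M\). Lemma~\ref{sec:main-technical-lemma} therefore shows that the image of \(\pi_1(F)\to\pi_1(M)\) has finite index in \(\pi_1(M)\). A finite subgroup of finite index forces \(\pi_1(M)\) to be finite.

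This second case carries the real content, and it does not actually need the isolated fixed point hypothesis of the previous theorem. The step to double-check is the Euler characteristic localization \(\chi(M)=\chi(M^{S^1})\) for non-simply-connected, possibly non-orientable \(M\). This is standard: it holds for any compact \(S^1\)-manifold, for instance via an invariant Morse function or a triangulation argument, with no orientability assumption.
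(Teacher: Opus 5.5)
Your proof is correct and is the paper's argument: from \(\chi(M^{S^1})=\chi(M)>0\) you get a fixed point component \(F\) with \(\chi(F)>0\), hence \(F\) is a point, \(S^2\) or \(\mathbb{R}P^2\) with finite \(\pi_1(F)\), and Lemma~\ref{sec:main-technical-lemma} together with the Cheeger--Gromoll splitting theorem finishes it. The one difference is your separate treatment of isolated fixed points via the preceding theorem, which is redundant: the paper simply includes the point case in the same uniform application of the lemma.
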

\begin{proof}
  Because \(\chi(M^{S^1})=\chi(M)>0\), there is a fixed point component \(F\subset M\) with \(\chi(F)>0\). Since \(\dim F\leq 2\), this component is diffeomorphic to a point, \(S^2\) or \(\R P^2\). Therefore the corollary follows from Lemma~\ref{sec:main-technical-lemma} and the splitting theorem.
\end{proof}

\begin{cor}
\label{sec:torus-manifolds-with}
  Let \(M\) be a \((2n+4)\)-dimensional closed connected manifold with  a  metric of non-negative Ricci curvature.
  Assume that that there is no abelian subgroup with at most two generators and finite index in \(\pi_1(M)\).
  If \(M\) admits a (not necessarily isometric) effective action of a torus \(T\) of dimension \(n\) with fixed point, then there is a \(T\)-invariant metric of positive scalar curvature on \(M\).
\end{cor}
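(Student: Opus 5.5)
Since \(M\) is closed and has a \(T\)-fixed point, finitely many orbit types let us choose a circle \(S^1\subset T\) with \(M^{S^1}=M^T\). Let \(F\) be the component of \(M^{S^1}\) through a \(T\)-fixed point \(x\). Near \(x\) the action looks like a faithful \(n\)-dimensional torus representation on \(T_xM\cong\mathbb{R}^{2n+4}\), which splits into \(n\) non-trivial weight planes plus possibly further non-trivial planes and a trivial summand. I will arrange the choice of \(S^1\), or pass to a suitable component, so that \(\dim F\le 4\), and then aim to show \(\pi_1(F)\) is ``small''.

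The plan is to argue by contradiction with the Gromov--Lawson / Schoen--Yau type surgery results for invariant positive scalar curvature, which I expect the paper to use (Hanke's theorem, or the \(T\)-equivariant version of Bérard-Bergery). Such a result says: if every component of the fixed point set of some circle \(S^1\subset T\) has codimension at least \(2\), the base of the relevant quotient argument goes through and \(M\) admits a \(T\)-invariant metric of positive scalar curvature. In its Hanke form, this holds when the action is effective, has a fixed point, and \(M^{S^1}\) has codimension \(\ge 2\) for some circle. So I expect the statement to reduce to a condition on the fixed point components of circles. The obstruction case is when no such circle has large codimension fixed set, which happens when fixed components are large, namely of dimension \(\ge \dim M-2\) in the relevant sense. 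Near the fixed point \(x\) the tangent representation has \(n\) weights spanning \(\mathfrak t^*\), plus a \(4\)-dimensional complement on which at most a \(2\)-dimensional part of \(T\) acts freely. From this I will extract either the positive scalar curvature conclusion, or a circle whose fixed component \(F\) through \(x\) has dimension at most \(4\) with \(\pi_1\) constrained, namely \(F\) a point, a surface, or a \(4\)-manifold carrying an effective action of a residual torus.

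Now apply Lemma~\ref{sec:main-technical-lemma}. By the Cheeger--Gromoll splitting, \(\tilde M\) has the homotopy type of a compact manifold, so the image of \(\pi_1(F)\to\pi_1(M)\) has finite index. In the bad case, \(F\) has dimension \(\le 4\) and carries an effective action of a torus of dimension \(\ge \dim F/2\) with a fixed point. For \(\dim F\le 2\), \(F\) is a point, a disk-like surface, or a sphere, or else \(\pi_1(F)\) is abelian with at most two generators (torus or Klein bottle cases are excluded by the fixed point). For \(\dim F=4\) with a \(T^2\)-action having a fixed point, \(\pi_1(F)\) is a quotient of that of the orbit space boundary structure, which yields at most cyclic groups. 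Either way, the image would be an abelian subgroup with at most two generators and finite index in \(\pi_1(M)\), contradicting the hypothesis. Hence we are in the good case and the invariant positive scalar curvature metric exists.

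The main obstacle is step two: precisely identifying which surgery/PSC theorem applies and the exact dimension bookkeeping showing that the failure of its hypothesis forces a fixed component of dimension \(\le 4\) with fundamental group generated by at most two commuting elements. I would first try Hanke's codimension-two criterion, where the obstruction is a circle whose fixed set has codimension \(2\), and use the weight decomposition at \(x\) to bound that component's dimension and \(\pi_1\).
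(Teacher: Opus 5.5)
Parts of your frame are right: choosing \(S^1\subset T\) with \(M^{S^1}=M^T\), getting \(\dim F\le 4\) from effectiveness, and using Lemma~\ref{sec:main-technical-lemma} with the splitting theorem to make the image of \(\pi_1(F)\) have finite index in \(\pi_1(M)\). But there is a genuine gap, and your case split is inverted. The positive scalar curvature input cannot be a criterion like ``some circle has fixed set of codimension \(\ge 2\)''. That holds for every non-trivial circle action, so it would make both the Ricci and the \(\pi_1\) hypotheses superfluous. What the paper invokes is the result of \cite{wiemeler2}, which applies once \(M^T\) has a component of codimension \(2n=2\dim T\), i.e.\ here a \(4\)-dimensional component. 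So \(\dim F=4\) is the case you must \emph{reach}, not rule out. Your attempt to rule it out fails anyway. First, \(F\subset M^{S^1}=M^T\) is fixed pointwise by \(T\), so there is no residual \(T^2\)-action on it. Second, the case genuinely occurs with large \(\pi_1(F)\): \(M=T^4\times(S^2)^n\), with \(T^n\) rotating the sphere factors, satisfies every hypothesis of the corollary, and its fixed components are copies of \(T^4\).

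The second missing idea is \emph{which} component \(F\) to take. For an arbitrary component through a fixed point, already \(\dim F=2\) is uncontrolled. Here \(F\) is a closed surface on which \(T\) acts trivially, so ``disk-like'' surfaces do not occur and the presence of a fixed point says nothing about its topology. It can have any genus \(g\), and a genus-\(g\) surface group surjects onto \(\mathbb{Z}^g\). For \(g\ge 3\), Lemma~\ref{sec:main-technical-lemma} then gives no contradiction, e.g.\ with \(\pi_1(M)\cong\mathbb{Z}^3\).

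The paper closes this with an Euler characteristic argument:
the hypothesis forces \(\pi_1(M)\) to be infinite, since otherwise the trivial subgroup is a finite-index abelian subgroup with no generators. Hence \(\chi(M)=0\) by the splitting theorem. Since \(\chi(M^T)=\chi(M)\), some component \(F\) of \(M^T\) has \(\chi(F)\ge 0\). Suppose this \(F\) had \(\dim F\le 2\). Then it would be a point, \(S^2\), \(\mathbb{R}P^2\), \(T^2\) or the Klein bottle, whose fundamental group is finite or contains \(\mathbb{Z}^2\) with finite index. Lemma~\ref{sec:main-technical-lemma} would then give \(\pi_1(M)\) a finite-index abelian subgroup with at most two generators, contradicting the hypothesis. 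Hence \(\dim F=4\), and the codimension-\(2\dim T\) result yields the \(T\)-invariant metric of positive scalar curvature.
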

\begin{proof}
  Since \(M\) has infinite fundamental group it follows from the splitting theorem that \(\chi(M)=0\).
  Because \(\chi(M^{T})=\chi(M)\) there is a fixed point component \(F\) with \(\chi(F) \geq 0\).
  If \(\dim F < 4\), then \(F\) has finite fundamental group or contains an abelian subgroups of finite index generated by two elements.
  Therefore by Lemma~\ref{sec:main-technical-lemma} and the splitting theorem \(M\) has also such a fundamental group.
  This is not true by assumption.
  Hence, \(\dim F=4\).
  
  Since \(\dim M=2n+4\) it follows from \cite{wiemeler2} that \(M\) admits an invariant metric of positive scalar curvature.
\end{proof}

\section{Examples}
\label{sec:examples}

In this section we discuss examples.
At first we give two applications of Corollary~\ref{sec:introduction-1}.
In the first of these examples it is used as an obstruction to almost non-negative sectional curvature on manifolds with torus actions.
In the second the corollary is used as an obstruction to the existence of certain torus actions on non-negatively curved manifolds.

Finally we review the examples of Bru\`e, Naber and Semola \cite{brue_compact_2026} which showed that Conjecture \ref{sec:introduction-4} does not hold for closed manifolds of non-negative Ricci curvature.

\subsection{{Corollary~\ref{sec:introduction-1} as an obstruction to almost non-negative curvature}}
\label{sec:coroll-refs-1}

In this section we give more details on the construction of Example~\ref{sec:introduction-3}.
Let \(p\in \mathbb{N}\) be a prime number, \(n\in \mathbb{N}\), \(n\geq 1\), \(S\) a (\(4n+2\))-dimensional standard sphere, \(\Sigma\) a lens space of dimension \(2n+1\) with \(\pi_1(\Sigma)= \mathbb{Z}_p\).

Then \(S\) equipped with the action of a maximal torus \(T^{2n+1}\) of \(SO(4n+3)\) is a torus manifold.
We let \(S'\) be the complement of an equivariant tubular neighborhood of a principal orbit of the \(T^{2n+1}\)-action on \(S\).
Moreover, we let \(\Sigma'\) be the complement of a tubular neighborhood of a point in \(\Sigma\).
Then the boundaries of \(S'\) and \(\Sigma'\times T^{2n+1}\) are both equivariantly diffeomorphic to \(S^{2n}\times T^{2n+1}\).
Here the \(T^{2n+1}\)-action on the two products is given by multiplication on the second factor.

So we can equivariantly glue \(S'\) and \(\Sigma'\times T^{2n+1}\) along their boundaries to construct a \((4n+2)\)-dimensional torus manifold \(M\).
By the proof of Theorem 2.2 in \cite{wiemeler_exotic_2013}, the fundamental group of \(M\) is isomorphic to \(\pi_1(\Sigma)=\mathbb{Z}_p\).
So for large \(p\), \(M\) does not admit almost non-negative sectional curvature by Corollary~\ref{sec:introduction-1}.

In the following we show that most of the previously known obstructions to the existence of almost non-negative sectional curvature cannot be used to get this conclusion.
(See the introduction of \cite{MR2630041} for a comprehensive list of these obstructions.)

First, the main result of \cite{MR2630041} shows that if there were almost non-negative curvature on \(M\) then \(\pi_1(M)\) contains a nilpotent subgroup of controlled finite index.
This is clearly satisfied by \(M\) because \(\pi_1(M)\) is a finite abelian group.

Since \(\pi_1(M)\) is cyclic, it clearly satisfies Gromov's estimate \cite{gromov_almost_1978} for the minimal number of generators of the fundamental group of an almost non-negatively curved manifold.

By \cite{wiemeler2} every torus manifold admits an invariant metric of positive scalar curvature.
In particular the Yamabe invariant of \(M\) is positive and its \(\hat{A}\)-genus vanishes in case \(M\) is spin.
So also the obstructions coming from \cite{schoen_variational_1989}, \cite{lebrun_ricci_2001}, \cite{gromov_volume_1982} and \cite{gallot_egalites_1983} cannot be used.

We also check that the manifold \(M\) itself satisfies Gromov's Betti number estimate \cite{gromov_curvature_1981}  for almost non-negatively curved manifolds.
This can be done by using Mayer-Vietoris-sequences.
To do so note that for a field \(K\) we have
\begin{equation*}
  \sum_{i\geq 0}\dim_K H^i(\Sigma;K)=
  \begin{cases}
    2n+2&\text{ if } \characteristic K=p\\
    2&\text{ otherwise. }
  \end{cases}
\end{equation*}
Hence we have
\begin{align*}
  \sum_{i\geq 0}\dim_K H^i(\Sigma'\times T^{2n+1};K)&\leq (2n+2) \cdot 2^{2n+1}
  \\ \sum_{i\geq 0}\dim_K H^i(S^{2n}\times T^{2n+1};K)&= 2 \cdot 2^{2n+1}
  \\\sum_{i\geq 0}\dim_K H^i(S';K)&\leq 2^{2n+1}+2
\end{align*}
by the Künneth theorem and the Mayer-Vietoris sequence for \(S=S'\cup (T^{2n+1}\times D^{2n+1})\).
Therefore from the Mayer-Vietoris sequence for \(M=S'\cup (\Sigma'\times T^{2n+1})\), we get
\begin{equation*}
  \begin{split}
     \sum_{i\geq 0} \dim_K H^i(M;K) & \leq  \sum_{i\geq 0} \dim_K H^i(\Sigma'\times T^{2n+1};K) + \sum_{i\geq 0}\dim_K H^i(S';K)\\ & + \sum_{i\geq 0}\dim_K H^i(S^{2n}\times T^{2n+1};K)\\ &\leq (2n+5)\cdot 2^{2n+1} +2.
  \end{split}
 \end{equation*}

 So for large \(n\) the sum of Betti numbers of \(M\) is smaller than \(2^{4n+2}\) which is both the sum of Betti numbers of a \((4n+2)\)-dimensional torus and the conjectural optimal upper bound for the sum of Betti numbers of an almost non-negatively curved manifold.
 
However, as one sees by tracing back the proof of Corollary~\ref{sec:introduction-1}, the universal covering space of \(M\) does not satisfy Gromov's Betti number estimate for fixed \(n\) and large \(p\).

\subsection{{Corollary~\ref{sec:introduction-1} as an obstruction to certain torus actions}}

Let \(M\) be a product of two lens spaces, then \(M\) is even-dimensional and admits a metric of non-negative sectional curvature.
But if the fundamental group of \(M\) is large then, by Corollary~\ref{sec:introduction-1} there is no torus action with at least one isolated fixed point on \(M\).
However there is an isometric action of a torus without any fixed points on \(M\).

\subsection{The examples of Bru\`e, Naber and Semola}
\label{sec:example-BNS}

In this section we review the construction of the examples of Bru\`e, Naber and Semola \cite{brue_compact_2026}.
While Theorem~\ref{sec:introduction-2} does not directly apply to these manifolds, we show that there is a torus action on these examples to which one can apply Lemma~\ref{sec:main-technical-lemma}.
From this we deduce that these examples do not admit almost non-negative sectional curvature.

These examples are \(9\)-dimensional manifolds \(N_k\), \(k\in\mathbb{N}\), with fundamental group a \(\mathbb{Z}_2\)-extension of finite Heisenberg groups \(\Gamma_k=H_3(\mathbb{Z}_k)\).
For simplicity in the following we assume that \(k\) is odd.
Then \(\mathbb{Z}_2\times \Gamma_k\) is the only such extension up to isomorphism because the order of \(\Gamma_k\) is odd.

The construction of the universal coverings \(\tilde{N}_k\) of \(N_k\) starts with a principal \(S^1\)-bundle \(\mathcal{N}_k\rightarrow \mathcal{B}_k\) over the \(3\)-sphere with \(2k\) open discs \(D^3\) removed such that the restriction to the boundary components of \(\mathcal{B}_k\) is given by the Hopf-fibration.

So the boundary components of \(\mathcal{N}_k\) are \(S^1\)-equivariantly diffeomorphic to \(S^3\) with the standard linear free \(S^1\)-action.
Hence one can glue to each boundary component a copy of \(D^4\) to get a four-dimensional spin-manifold \(M_k\) with semi-free \(S^1\)-action.
Since the fixed points of this \(S^1\)-action are isolated, it is of even type, i.e. it is compatible with the \(\text{Spin}\)-strucure.

It is shown in the proof of Lemma 4.4 of \cite{brue_compact_2026} that there is an effective action of \(\Gamma_k\) on \(M_k\) which commutes with the \(S^1\)-action on \(M_k\).
As this action is not free they further lift the \(\Gamma_k\times S^1\)-action to the orthogonal frame bundle \(FM_k\) associated to the tangent bundle of \(M_k\).

Since \(M_k\) is spin there is a two-fold covering \(P_{\text{Spin}}M_k\rightarrow FM_k\) by a principal \(\text{Spin}(4)\)-bundle \(P_{\text{Spin}}M_k\) over \(M_k\).
Because \(k\) is odd the \(\Gamma_k\times S^1\)-action on \(FM_k\) lifts to an action of  \(\mathbb{Z}_2\times \Gamma_k\times S^1\) on \(P_{\text{Spin}}M_k\).
This action commutes with the action of a maximal torus \(T^2\subset \text{Spin}(4)\) on \(P_{\text{Spin}}M_k\).

Then \(\tilde{N}_k\) is defined as \(P_{\text{Spin}}M_k/H\), where \(H\) is a suitable circle subgroup of \(T^2\) such that the induced \(\mathbb{Z}_2\times \Gamma_k\)-action on \(\tilde{N}_k\) is free.
So one can define \(N_k=\tilde{N}_k/(\mathbb{Z}_2\times \Gamma_k)\).

On \(\tilde{N}_k\) there is an effective \(S^1\times (T^2/H)\)-action which commutes with the \(\mathbb{Z}_2\times \Gamma_k\)-action and is induced from the action on \(P_{\text{Spin}}M_k\).
Note that \(T^2/H\) acts freely on \(\tilde{N}_k\). 
Therefore it is easy to see that the \(S^1\times (T^2/H)\)-action restricted to the preimage of the open principal stratum of the \(S^1\)-action on \(M_k\) is free.

Let \(x\in M_k\) be an \(S^1\)-fixed point.
Then \(S^1\times (T^2/H)\) acts on the fiber \(\text{Spin}(4)/H\)  over \(x\) of \(\tilde{N}_k\rightarrow M_k\), such that the following holds:
\begin{enumerate}
\item \(T^2/H\) acts by multiplication from the right on \(\text{Spin}(4)/H\).
\item \(S^1\) acts by multiplication from the left on \(\text{Spin}(4)/H\) via a lift \(S^1\rightarrow \text{Spin}(4)\) of the isotropy representation \(S^1\rightarrow SO(4)\) on \(T_xM_k\).
\end{enumerate}
Since the Euler characteristic of \(\text{Spin}(4)/T^2\) is \(4\neq 0\), there must be a one-dimensional \(S^1\times (T^2/H)\)-orbit in \(\text{Spin}(4)/H\).

So there exists a fixed point component \(\tilde{F}\subset \text{Spin}(4)/T^2\) of some circle subgroup \(K\subset S^1\times (T^2/H)\).
Denote by \(\Gamma_{\tilde{F}}\) the subgroup of \(\mathbb{Z}_2\times \Gamma_k\) which leaves \(\tilde{F}\) invariant.
Let also \(F\) be the image of \(\tilde{F}\) in \(N_k\).

Then \(F\) is a fixed point component of the induced \(K\)-action on \(N_k\) and the image of \(\pi_1(F)\rightarrow \pi_1(N_k)\) can be identified with \(\Gamma_{\tilde{F}}\).
Note that \(\Gamma_{\tilde{F}}\) is a finite subgroup of \(\mathbb{Z}_2\times\Gamma_{kx}\) where \(\Gamma_{kx}\) is the subgroup of \(\Gamma_k\) which fixes \(x\in M_k\).

Since the \(\Gamma_k\)-action on \(M_k\) is effective and commutes with the semi-free \(S^1\)-action, \(\Gamma_{kx}\) can be identified with a finite subgroup of \(U(2)\) via the isotropy representation on \(T_xM_k\).
Hence it follows from Jordan's Theorem that \(\Gamma_{\tilde{F}}\) contains an abelian subgroup of uniformly bounded finite index.
Therefore by our Lemma~\ref{sec:main-technical-lemma} and Theorem~\ref{sec:almost-non-negative-1} \(N_k\) does not admit almost non-negative sectional curvature for \(k\) large, since otherwise the \(\mathbb{Z}_2\times\Gamma_k\) would contain an abelian subgroup of uniformly bounded finite index.

Again the actual obstruction to almost non-negative sectional curvature on \(N_k\) is that the total Betti numbers of the universal coverings \(\tilde{N}_k\) are unbounded.
One can also establish this using other methods.

Indeed, \(\tilde{N}_k\) is a fiber bundle with simply connected fiber over the simply connected manifold \(M_k\).
Moreover, the second Betti number of \(M_k\) is equal to
\[b_2(M_k)=\chi(M_k)-2=|M_k^{S^1}|-2=2(k-1).\]
Hence from the Hurewicz theorem and the long exact homotopy sequence for the fibration \(\tilde{N}_k\rightarrow M_k\) one gets
\[b_2(\tilde{N}_k)=\dim \pi_2(\tilde{N}_k)\otimes \mathbb{Q} \geq \dim \pi_2(M_k)\otimes \mathbb{Q}=b_2(M_k)=2(k-1)\]
is unbounded.

\bibliographystyle{amsalpha}
\bibliography{torus_ricci}

\end{document}